\newtheorem{thm}{Theorem}[section]
\newtheorem{cor}[thm]{Corollary}
\newtheorem{lem}[thm]{Lemma}
\newtheorem{prop}[thm]{Proposition}
\theoremstyle{definition}
\newtheorem{rem}[thm]{Remark}
\numberwithin{equation}{section}
\def\la{\lambda}
\def\l{\langle}
\def\r{\rangle}
\def\pl{\Delta_p}
\DeclareMathOperator\vol{vol}
\DeclareMathOperator\dive{div}
\DeclareMathOperator\tr{tr}
\DeclareMathOperator\arsinh{arsinh}
\DeclareMathOperator\arcosh{arcosh}
\DeclareMathOperator\Ind{Ind}
\DeclareMathOperator\sd{sh_\delta}
\DeclareMathOperator\cd{ch_\delta}
\DeclareMathOperator\Th{th_\delta}
\DeclareMathOperator\sn{sn_\delta}
\DeclareMathOperator\Tho{th}
\begin{document}
\title{Reilly-type inequalities for submanifolds in Cartan-Hadamard manifolds}

\author{Hang Chen}
\address[Hang Chen]{School of Mathematics and Statistics, Northwestern Polytechnical University, Xi' an 710129, P. R. China \\ email: chenhang86@nwpu.edu.cn}
\thanks{Chen supported by Shaanxi Fundamental Science Research Project for Mathematics and Physics Grant No.~22JSQ005 and Natural Science Foundation of Shaanxi Province Grant No.~2020JQ-101.}
\author{Xudong Gui}
\address[Xudong Gui]{School of Mathematical Sciences, University of Science and Technology of China, Hefei 230026, P. R. China \\ email: guixd7b@mail.ustc.edu.cn}

\begin{abstract}
	Let $M$ be an $m (\ge2)$-dimensional closed orientable submanifold in an $n$-dimensional complete simply-connected Riemannian manifold $N$, where the sectional curvature of $N$ is bounded above by $\delta$. When $\delta<0$, inspired by Niu-Xu (arXiv:2106.01912), we give new upper bounds for the first nonzero eigenvalues of the $p$-Laplacian and the $L_T$ operator, respectively. These generalize Niu-Xu's work for the Laplacian (arXiv:2106.01912)  and improve the estimates due to Chen (Nonlinear Anal.,196, 111833, 2020) for the $p$-Laplacian and Grosjean (Hokkaido Math. J., 33(2), 319--339, 2004) for the $L_T$ operator, respectively. We also obtain several Reilly-type inequalities for the weighted manifolds and some boundary value problems.
\end{abstract}

\keywords {Reilly-type inequalities, $p$-Laplacian, $L_T$ operator, eigenvalue estimates, Cartan-Hadamard manifolds.}

\subjclass[2020]{58C40, 53C42, 35P15}

\maketitle

\section{Introduction}

In geometry of submanifolds,  Reilly-type inequality is an important estimate for the first non-zero eigenvalue of Laplacian, which gives the upper bound in terms of the mean curvature. It originated from Reilly's work \cite{Rei77} in 1977, where closed (orientable) submanifolds in the Euclidean space were considered and a special case says that
\begin{equation}\label{Rei0}
	\lambda_1^{\Delta}\leq\frac{m}{\vol(M)}\int_M|\mathbf{H}|^2.
\end{equation}
Here $m$ and $\mathbf{H}$ are the dimension and the mean curvature vector of the submanifold respectively, and $\lambda_1^{\Delta}$ represents the first non-zero eigenvalue of the Laplacian on the submanifold.
Later, Heintze \cite{Hei88} considered a more general case that the sectional curvature of the ambient manifold $N$ is bounded above by a constant  $\delta$, and showed that
	\begin{numcases}
	{\lambda_{1}^{\Delta}\leq} \frac{m}{\vol(M)}\int_M\big(\delta+|\mathbf{H}|^2\big)& for $\delta\geq 0$, \label{pos}\\
	m\big(\delta+\max |\mathbf{H}|^2\big) &for $\delta< 0$, \label{neg}
\end{numcases}
here it is required that $N$ is simply-connected when $\delta\le 0$ and $M$ lies in a convex ball of $N$ of radius $\le \pi/4\sqrt{\delta}$ when  $\delta>0$. Notice that the approach of Heintze cannot give the upper bound involving $L^2$-norm of the mean curvature when $\delta<0$ while there is a restriction on the radius of the convex ball when $\delta>0$.
However, when $N$ is the space form of constant curvature $\delta$, El Soufi and Ilias \cite{EI92}  removed the restrictions and gave a sharp upper bound
\begin{equation}\label{Rei_c}
	\lambda_1^{\Delta}\leq\frac{m}{\vol(M)}\int_M(\delta+|\mathbf{H}|^2)
\end{equation}
by using a conformal map method whenever $\delta>0$ and $\delta<0$.

Very recently, Niu-Xu did nice work which says the estimate \eqref{neg} can be improved to
\begin{equation}\label{neg1}\\
	{\lambda_{1}^{\Delta}\leq} \frac{m}{\vol(M)}\int_M\big(\delta+|\mathbf{H}|^2\big)
\end{equation}
when $\delta< 0$ and $m\ge 2$ (\cite[Theorem 1.1]{NX21}).

Note that the original Reilly's inequalities in \cite{Rei77} actually involve not only the mean curvature, but also the higher order mean curvatures.
More generally, given a symmetric $(1,1)$-tensor $T=(T_{ij})$ on $M$,
we can define a normal vector field $H_T$ associated with $T$ by
\begin{equation*}
	H_T=\sum_{i=1}^mA(Te_i,e_i)=\sum_{\substack{m+1\leq \alpha\leq n\\ 1\leq i, j\leq m}}h_{ij}^{\alpha}T_{ij}e_{\alpha},
\end{equation*}
where $\{e_1,\cdots, e_n\}$ is a local orthonormal frame of $N$ such that $\{e_1,\cdots, e_m\}$ are tangent to $M$ and $\{e_{m+1},\cdots, e_{n}\}$ are normal to $M$,
and $A=(h_{ij}^\alpha)$ the second fundamental form of $M$ in $N$.
Actually, $H_T$ can be viewed as a natural generalization of the mean curvature and the higher order mean curvature, and has been involved in a lot of results (\cites{Gro04, CL07, CW19a}).
We call $M$ is $T$-minimal if $H_T$ vanishes.

To generalize the Reilly-type inequalities, a common approach is to consider the other operators instead of the Laplacian.
Usually, there are two classes of the elliptic operators as alternatives: one is the $p$-Laplacian and the other one is the $L_T$ operator.
We states them as follows respectively.

\subsection{$p$-Laplacian}
The $p$-Laplacian ($p>1$) on a compact $M$ is defined by
\begin{equation*}
	\pl u=\dive (|\nabla u|^{p-2}\nabla u).
\end{equation*}
It is a second order quasilinear elliptic operator for
$p\neq 2$ and can be viewed as a generalization of the Laplacian (corresponding to $p=2$).
A real number $\la$ is called an eigenvalue if there exists a non-zero function $u$ satisfying the following eigenvalue equation
\begin{equation*}
	\pl u=-\la |u|^{p-2} u\quad  \text{on $M$}
\end{equation*}
with appropriate boundary conditions.
Although the regularity theory of the $p$-Laplacian is very different from the usual Laplacian, there is the smallest positive eigenvalue (i.e., the first non-zero eigenvalue) (cf. \cites{GP87,Le06}), denoted by $\lambda_{1,p}$. When $p=2$, $\lambda_{1,2}$ is just the first non-zero eigenvalue $\lambda_1^{\Delta}$ of the Laplacian.
Moreover, $\lambda_{1,p}$ has a Rayleigh-type variational characterization.
For instance, if $M$ is closed, then (cf. \cite{Ver91})
\begin{equation}\label{Ray}
	\la_{1,p}=\inf\left\{ \left.\frac{\int_M |\nabla u|^p}{\int_M|u|^p}\, \right|\, u\in W^{1,p} (M)\backslash \{0\}, \int_M|u|^{p-2} u =0 \right\}.
\end{equation}

Many estimates for the first non-zero eigenvalue of Laplacian in Riemannian geometry have been generalized to $p$-Laplacian (e.g., \cite{Che20} and the references therein).
In particular, the inequalities mentioned above have been extended from Laplacian to $p$-Laplacian.
For example, Du-Mao \cite{DM15} generalized \eqref{Rei0} in the Euclidean space case; Wei and the first author \cite{CW19} generalized \eqref{Rei_c} by the conformal map method in the space form case.

Recently, the first author generalized \eqref{pos} and \eqref{neg} and proved
\begin{thm}[{cf. \cite[Theorem 1.3]{Che20}}]\label{thm1.1}
	Let $M$ be an $m(\geq 2)$-dimensional  closed orientable submanifold in an $n$-dimensional  Riemannian manifold $N$ of sectional curvature $K_N\leq \delta$. If $\delta\leq 0$, we assume that $N$ is simply-connected;  if $\delta>0$,  we assume that $M$ is contained in a convex ball of radius $\leq \pi/4\sqrt{\delta}$; if $\delta<0$, we assume that $M$ is contained in a convex ball of radius $\leq \frac{1}{2\sqrt{-\delta}}\arsinh(\sqrt{-\delta})$.
	Let $T$ be a  symmetric, positive definite and divergence-free $(1,1)$-tensor on $M$.

	Then the first non-zero eigenvalue  $\lambda_{1,p}$ of the $p$-Laplacian ($p>1$) on $M$ satisfies:

(1) For $\delta=0$,
\begin{equation*}
	\lambda_{1,p}\Big(\int_M\tr T\Big)^{p}\leq n^{\frac{|p-2|}{2}}m^{p/2}\vol(M)\Big(\int_M|H_T|^{\frac{p}{p-1}}\Big)^{p-1}, \quad \mbox{ for } p>1.
\end{equation*}

In particular, when $T=I$ the identity operator, we have
\begin{equation*}
	\lambda_{1,p}\leq n^{\frac{|p-2|}{2}}\frac{m^{p/2}}{(\vol(M))^{p-1}}\Big(\int_M|\mathbf{H}|^{\frac{p}{p-1}}\Big)^{p-1}, \quad \mbox{ for } p>1.
\end{equation*}

	(2) For $\delta>0$,
	\begin{equation*}
		\la_{1,p}\leq (n+1)^{\frac{|p-2|}{2}}m^{p/2}\delta^{p/2}\left(\frac{1}{\vol(M)}\int_M(1+|\mathbf{H}/\sqrt{\delta}|^2)\right)^{\max\{1,p/2\}}.
	\end{equation*}

	(3) For $\delta<0$,
	\begin{equation*}
		\lambda_{1,p}\le n^{\frac{|p-2|}{2}}m\big(\delta+\max|\mathbf{H}|^2\big) \quad \mbox{for } 1<p\le 2.
	\end{equation*}
\end{thm}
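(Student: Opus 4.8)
The plan is to deduce all three cases from the variational characterization \eqref{Ray} by combining two separate inequalities: a \emph{Reilly-type curvature estimate} that extracts $H_T$ by integrating a comparison Hessian, and an \emph{eigenvalue estimate} that bounds $\la_{1,p}\int_M r^p$ geometrically, where $r=d_N(o,\cdot)$ for a suitably chosen centre $o$. Concretely, for case (1) I would prove
\begin{equation*}
	\Bigl(\int_M\tr T\Bigr)^{p}\le\Bigl(\int_M r^{p}\Bigr)\Bigl(\int_M|H_T|^{\frac{p}{p-1}}\Bigr)^{p-1}
\end{equation*}
and
\begin{equation*}
	\la_{1,p}\int_M r^{p}\le n^{\frac{|p-2|}{2}}m^{p/2}\vol(M),
\end{equation*}
after which the theorem drops out by elimination: the first gives $\int_M r^{p}\ge(\int_M\tr T)^{p}/(\int_M|H_T|^{p/(p-1)})^{p-1}$, and feeding this into the second is exactly the asserted bound; the special case $T=I$ then follows from $\tr I=m$ and $H_I=m\mathbf H$.

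For the curvature estimate I would run the classical Reilly computation on $f=\tfrac12 r^2$. The Gauss formula together with $\dive T=0$ gives
\begin{equation*}
	L_T f=\langle T,\nabla^2 f|_{TM}\rangle+\langle(\nabla^N f)^{\perp},H_T\rangle,
\end{equation*}
where $\nabla^2$ is the Hessian on $N$. Since $K_N\le0$, the Hessian comparison theorem yields $\nabla^2(\tfrac12 r^2)\ge g_N$, and because $T$ is positive definite this forces $\langle T,\nabla^2 f|_{TM}\rangle\ge\tr T$. Integrating over the closed manifold $M$ makes $\int_M L_T f=0$, so
\begin{equation*}
	\int_M\tr T\le-\int_M\langle(\nabla^N f)^{\perp},H_T\rangle\le\int_M r\,|H_T|\le\Bigl(\int_M r^{p}\Bigr)^{1/p}\Bigl(\int_M|H_T|^{\frac{p}{p-1}}\Bigr)^{\frac{p-1}{p}},
\end{equation*}
using $|(\nabla^N f)^{\perp}|=r|(\partial_r)^{\perp}|\le r$ and Hölder; raising to the $p$-th power is the curvature estimate.

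For the eigenvalue estimate I would use the chart $\Phi=\exp_o^{-1}\circ X\colon M\to T_oN\cong\mathbb R^{n}$, so that $r=|\Phi|$. In a Cartan--Hadamard manifold $\exp_o$ is distance non-decreasing, hence $\Phi$ is $1$-Lipschitz and $\sum_i|\nabla\Phi_i|^2=|d\Phi|^2\le m$; comparing the $\ell^p$ and $\ell^2$ norms in $\mathbb R^{n}$ then produces the factor $n^{|p-2|/2}$ and reduces everything to a vector Poincaré inequality $\la_{1,p}\int_M|\Phi|^{p}\le n^{|p-2|/2}\int_M|d\Phi|^{p}$ for maps with vanishing $p$-barycentre. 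Cases (2) and (3) follow the same template with $\exp_o^{-1}$ replaced by the comparison embedding of the convex ball: for $\delta>0$ one uses the spherical model in $\mathbb R^{n+1}$ with coordinates built from $\cos(\sqrt\delta\,r)$ and $\sin(\sqrt\delta\,r)$, whence the factor $(n+1)^{|p-2|/2}$, the constant $\delta^{p/2}$, and the combination $1+|\mathbf H/\sqrt\delta|^2$ (the exponent $\max\{1,p/2\}$ recording whether one raises the $\ell^2$ bound before or after integrating); for $\delta<0$ one uses $\cosh(\sqrt{-\delta}\,r)$ and $\sinh(\sqrt{-\delta}\,r)$ and estimates the curvature term by its supremum, which is why only $\max|\mathbf H|^{2}$ and the additive $\delta$ (from squaring the radial comparison) appear. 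The radius restrictions $\le\pi/4\sqrt\delta$ and $\le\frac{1}{2\sqrt{-\delta}}\arsinh(\sqrt{-\delta})$ are precisely what keep the comparison functions in the monotone regime where these inequalities carry the correct sign.

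The step I expect to be the main obstacle is the balancing needed to make the eigenvalue estimate rigorous when $p\neq2$. For $p=2$ one simply takes $o$ to be the centre of mass, so that each component $\Phi_i$ is automatically orthogonal to the constants and $\sum_i(\Phi_i)^2=r^2$; but for $p\neq2$ the vector condition $\int_M|\Phi|^{p-2}\Phi=0$ coming from the $p$-barycentre does \emph{not} coincide with the scalar admissibility conditions $\int_M|\Phi_i|^{p-2}\Phi_i=0$ required to insert $\Phi_i$ into \eqref{Ray}. Reconciling these --- either by proving the vector Poincaré inequality above directly, or by a Hersch-type degree argument that balances all components simultaneously --- is the delicate point, and it is also where the norm-equivalence constant $n^{|p-2|/2}$ and the restriction $1<p\le2$ in case (3) genuinely enter, since the elementary convexity inequalities relating $|d\Phi|^{2}$ to its $p$-th power keep that constant without an extra spread term only in the stated range. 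A secondary burden is tracking the comparison functions $\sin,\cos,\sinh,\cosh$ with their correct signs through the $p$-th-power Hölder bookkeeping in the curved regimes, where the chart is no longer a contraction.
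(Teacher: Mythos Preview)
Your outline is correct and matches the methodology behind Theorem~\ref{thm1.1} (which the present paper only quotes from \cite{Che20}; the machinery, however, appears here as Lemma~\ref{lem2.1} and Lemma~\ref{lem2}). The two-step scheme ``curvature estimate via Hessian comparison $+$ eigenvalue estimate via normal-coordinate test functions'' is exactly the approach, and your identification of the $n^{|p-2|/2}$ factor as pure $\ell^p$/$\ell^2$ norm equivalence on $\mathbb{R}^n$ is right.

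The one place where you should commit rather than hedge is the balancing step. Of your two options, the ``vector Poincar\'e inequality'' with the single $p$-barycentre condition $\int_M|\Phi|^{p-2}\Phi=0$ does \emph{not} go through: the scalar Rayleigh characterization \eqref{Ray} demands $\int_M|\Phi_i|^{p-2}\Phi_i=0$ for \emph{each} coordinate separately, and these $n$ conditions are not implied by the vector one. The correct route is precisely your second option, the Hersch-type index argument, and this is what the paper carries out in Lemma~\ref{lem2}: one shows that the vector field
\[
q\longmapsto\Bigl(\int_M\bigl|\tfrac{\phi(r)}{r}x_i\bigr|^{p-2}\tfrac{\phi(r)}{r}x_i\Bigr)_{1\le i\le n}
\]
on a convex ball containing $M$ points inward on the boundary, so by the Poincar\'e--Hopf/Morse index relation it must vanish at some interior point $q_0$. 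With that $q_0$ as centre, each $\tfrac{\phi(r)}{r}x_i$ is individually admissible in \eqref{Ray}, you sum the $n$ resulting inequalities, and then apply the $\ell^p$/$\ell^2$ comparison on both sides exactly as you describe. Once you run the argument this way, cases (2) and (3) are indeed the same template with $\phi=\sn$ and the trigonometric/hyperbolic comparison functions; the radius hypotheses and the restriction $1<p\le2$ in (3) enter for the reasons you state.
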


The estimate is not good enough in the case of $\delta<0$ for at least two reasons: (i) there is a restriction on the radius of the convex ball for $1<p<2$, but it is not necessary for $p=2$ (cf. Heintze's upper bound \eqref{neg}); (ii) the estimate is just valid for $1<p<2$ and no information is given for $p>2$.

Inspired by Niu-Xu \cite{NX21}, we extend \eqref{neg1} to $p$-Laplacian and give a new upper bound of $\lambda_{1,p}$ without previous restrictions for $\delta<0$. Precisely, we prove the following theorem.
\begin{thm}\label{thm1}
	Let $M$ be an $m(\ge 2)$-dimensional
	closed orientable submanifold in an $n$-dimensional complete simply-connected Riemannian manifold $N$ of sectional curvature $K_N\le  \delta < 0$.
	Then the first non-zero eigenvalue $\la_{1,p}$ of $p$-Laplacian on $M$ satisfies
	\begin{numcases}
		{\la_{1,p}\le  }
		n^\frac{|p-2|}{2}  \Big(\frac{m}{\vol(M)}\Big)^{p/2}\Big( \int_M |\mathbf{H}|^2 \Big)^{p/2-1} \int_M (\delta + |\mathbf{H}|^2),
		& for $p\ge 2$;\label{eq-case1}\\
		n^\frac{|p-2|}{2} m^{p/2} \frac{(-\delta)^{p/2-1}}{\vol(M)} \int_M ( \frac{p}{2}\delta + |\mathbf{H}|^2 ),
		& for $1<p\le2$.\label{eq-case2}
	\end{numcases}
	Moreover, equality implies $p=2$ and $M$ is minimally immersed into a geodesic sphere
	of radius $\arsinh_{\delta}\sqrt{\frac{m}{\lambda_{1}^{\Delta}}}$, where $\arsinh_{\delta}$ is the inverse function of the $\delta$-hyperbolic sine $\sd(y)=\frac{1}{\sqrt{-\delta}}\sinh(\sqrt{-\delta}y)$.
\end{thm}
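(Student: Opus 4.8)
The plan is to test the variational characterization \eqref{Ray} with the components of a geodesic ``position map'' adapted to the curvature bound. Fix a point $o\in N$; since $N$ is a Cartan--Hadamard manifold, $\exp_o\colon T_oN\to N$ is a diffeomorphism, so I may write each $x\in N$ as $\exp_o(r\omega)$ with $r=d(o,x)$ and $\omega\in S^{n-1}\subset T_oN$, and define $\Psi^o\colon M\to T_oN\cong\mathbb{R}^n$ by $\Psi^o(x)=\sd(r)\,\omega=\frac{\sd(r)}{r}\exp_o^{-1}(x)$. The geometric engine is the Hessian comparison: from $K_N\le\delta<0$ one gets on all of $N$ the clean inequality $\operatorname{Hess}^N\cd(r)\ge(-\delta)\,\cd(r)\,g$, whence, writing $\{e_i\}$ for a tangent frame of $M$ and using $\sum_iA(e_i,e_i)=m\mathbf{H}$,
\[
\Delta_M\cd(r)=\sum_{i=1}^m\operatorname{Hess}^N\cd(r)(e_i,e_i)+m\langle\nabla^N\cd(r),\mathbf{H}\rangle\ge -m\delta\,\cd(r)+m\langle\nabla^N\cd(r),\mathbf{H}\rangle .
\]
I will also use the companion identities $|\Psi^o|^2=\sd(r)^2$, $\cd(r)^2+\delta\,\sd(r)^2=1$, $|\nabla^N\sd(r)|=\cd(r)$ and $|\nabla^N\cd(r)|=-\delta\,\sd(r)$, which are exactly the relations satisfied by the hyperboloid coordinates of the model space $\mathbb{H}^n(\delta)$; the comparison turns each model identity into an inequality in the direction I need.

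In order to use the scalar components $\Psi^o_1,\dots,\Psi^o_n$ as admissible functions in \eqref{Ray}, I must arrange the balancing constraints $\int_M|\Psi^o_i|^{p-2}\Psi^o_i=0$. I would obtain these by varying the centre $o$ and running a Hersch/Brouwer-type argument on the compactification $N\cup\partial_\infty N\cong\overline{B^n}$: as $o$ tends to a point $\xi\in\partial_\infty N$ the map $\Psi^o$ concentrates, so the vector of barycenters points (to leading order) along $\xi$ and is nonvanishing near the boundary; the boundary map then has degree $\pm1$ and a zero must occur in the interior. For $p=2$ this reduces exactly to Hersch's lemma and the frame-independent condition $\int_M\Psi^o=0\in T_oN$, recovering the normalization behind \eqref{neg1}. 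I expect this step to be the main obstacle: for $p\ne2$ the constraint is nonlinear and, because $|\Psi^o_i|^{p-2}\Psi^o_i$ is not covariant, genuinely frame-dependent, so the balancing has to be carried out over the choice of $o$ together with an orthonormal frame of $T_oN$, and the concentration analysis at $\partial_\infty N$ must be made uniform in the frame.

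Granting the balancing, I feed each $u=\Psi^o_i$ into \eqref{Ray} to get $\la_{1,p}\int_M|\Psi^o_i|^p\le\int_M|\nabla_M\Psi^o_i|^p$ and sum over $i$. On the left I bound $\sum_i|\Psi^o_i|^p$ from below and on the right $\sum_i|\nabla_M\Psi^o_i|^p$ from above, passing between the $\ell^p$- and $\ell^2$-norms of the $\mathbb{R}^n$-vectors $(\Psi^o_i)$ and $(\nabla_M\Psi^o_i)$ by the power-mean inequality; this is exactly where the factor $n^{|p-2|/2}$ is produced, the two power-mean steps running in opposite directions according as $p\ge2$ or $p\le2$. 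The problem is thereby reduced to $\int_M\sd(r)^2$, the energy $\int_M|\nabla_M\Psi^o|^2$, and the mean-curvature term $\int_M\langle\nabla^N\cd(r),\mathbf{H}\rangle$: integrating $\Delta_M\cd(r)$ (which gives zero), using the energy identity $\int_M|\nabla_M\Psi^o|^2=-\int_M\langle\Psi^o,\Delta_M\Psi^o\rangle$ in which the normal part of the tension field is $m\mathbf{H}$ and the tangential part is governed by $\delta$, and applying Cauchy--Schwarz, one extracts $\int_M(\delta+|\mathbf{H}|^2)$. The split into \eqref{eq-case1} and \eqref{eq-case2} comes from how this curvature integrand is pulled out of a term of the shape $\big(\mathrm{const}\cdot(-\delta)+|\mathbf{H}|^2\big)^{p/2}$: for $p\ge2$ the function $t\mapsto t^{p/2}$ is convex and Jensen/H\"older against $\int_M|\mathbf{H}|^2$ yields the prefactor $\big(\int_M|\mathbf{H}|^2\big)^{p/2-1}$ and the coefficient $\delta$, whereas for $1<p\le2$ it is concave and the tangent-line estimate at $t_0=-\delta$ yields the prefactor $(-\delta)^{p/2-1}$ and the coefficient $\tfrac{p}{2}\delta$.

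For the equality discussion I would trace the chain backwards. Equality forces both power-mean steps to be equalities; since $m\ge2$ the map $\Psi^o$ genuinely involves more than one component, so this is possible only when $p=2$. It also forces the Hessian comparison $\operatorname{Hess}^N\cd(r)=(-\delta)\cd(r)\,g$ and the Cauchy--Schwarz step $\mathbf{H}\parallel\nabla^N\cd(r)=-\delta\,\sd(r)\,\partial_r$ to be equalities with $r$ constant along $M$; hence $M$ lies in a geodesic sphere $\{r=r_0\}$ and is minimal in it, the radius being determined by $\sd(r_0)=\sqrt{m/\lambda_1^\Delta}$, i.e. $r_0=\arsinh_\delta\sqrt{m/\lambda_1^\Delta}$, which is the asserted rigidity.
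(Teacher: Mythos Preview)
Your proposal misses the two ideas that do the real work. First, the radial profile: you test with $\Psi^o=\sd(r)\,\omega$, which is Heintze's choice, and with it the comparison (Lemma~\ref{lem2.1}(1) with $\phi=\sd$) only gives $\sum_i|\nabla_M\Psi^o_i|^2\le m+(\cd^2(r)-1)|\nabla r|^2$; since $\cd^2-1=-\delta\,\sd^2$ is unbounded in $r$, there is no way to pass from this to an $L^2$ bound on $\mathbf H$---this is precisely why Heintze's method is stuck with $\max|\mathbf H|^2$ in \eqref{neg}. The paper (following Niu--Xu) instead tests with the \emph{bounded} profile $\Th(r)\,\omega$, for which Lemma~\ref{lem2.1}(1) yields $\sum_i\big|\nabla_M\big(\tfrac{\Th}{r}x_i\big)\big|^2\le m(1+\delta\,\Th^2(r))\in(0,m]$. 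Second, the key estimate you never invoke is Proposition~\ref{prop1} (Niu--Xu): $\int_M|\mathbf H|^2\int_M\Th^2(r)\ge(\vol M)^2$. Your ``integrate $\Delta_M\cd$ and Cauchy--Schwarz'' would at best produce an $\sd$-version of this, and that does not close against the unbounded $\sd$-energy above; the vague ``energy identity/tension field'' step cannot repair this, because the comparison inequalities on the gradient side go the wrong way once the profile is unbounded.

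Two smaller points. The $p$-splitting does not come from a pointwise expression of the form $(c(-\delta)+|\mathbf H|^2)^{p/2}$ but from $(1+\delta\,\Th^2)^{p/2}$ with $1+\delta\,\Th^2\in(0,1]$: for $p\ge2$ one uses $t^{p/2}\le t$ on $(0,1]$ (this is \eqref{eq-Case1}), and for $1<p\le2$ the tangent line of $t^{p/2}$ at $t=1$ gives $t^{p/2}\le 1+\tfrac p2(t-1)$ (this is \eqref{eq-Case2}); only afterwards is Proposition~\ref{prop1} inserted, and a final H\"older step relating $\int\Th^p$ to $\int\Th^2$ (resp.\ the trivial $\Th^2\le(-\delta)^{p/2-1}\Th^p$) produces the prefactor $(\int|\mathbf H|^2)^{p/2-1}$ (resp.\ $(-\delta)^{p/2-1}$). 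And the balancing is much easier than you fear: $M$ sits in a convex geodesic ball $B$, the continuous vector field $q\mapsto\big(\int_M\big|\tfrac{\Th}{r}x_i\big|^{p-2}\tfrac{\Th}{r}x_i\big)_i$ points inward on $\partial B$, and a Poincar\'e--Hopf argument on $\overline B$ (Lemma~\ref{lem2}) already yields the centre for every $p>1$, with no passage to $\partial_\infty N$ and no frame issue.
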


More generally, we can obtain the upper bound involving $H_T$.

\begin{thm}\label{thm2}
Assumptions as in Theorem \ref{thm1}.
Let $T$ be a symmetric, positive definite and divergence-free $(1,1)$-tensor on $M$.
Suppose the one of the followings holds:

(a)
$(\tr T )I - 2T\ge 0$, i.e., positive semidefinite;

(b) $M$ is contained in a convex ball of radius $\le \frac{1}{2\sqrt{-\delta}}\arcosh\sqrt{2}$.

	Then the first non-zero eigenvalue $\la_{1,p}$ of $p$-Laplacian on $M$ satisfies
	\begin{numcases}
	{\la_{1,p}\le }
	n^\frac{|p-2|}{2} m^{p/2} Q(T)^{p/2-1} \left(\delta+Q(T)\right),
	& for $p\ge 2$;\label{eq-case1'}\\				n^\frac{|p-2|}{2} m^{p/2}(-\delta)^{p/2-1} \left(\frac{p}{2}\delta+Q(T)\right),
	& for $1<p\le2$,\label{eq-case2'}
\end{numcases}
where
\begin{equation*}
	Q(T)=\frac{\vol (M) \sup_M \tr T}{\left(\int_M \tr T\right)^2}\int_M \frac{|\mathbf{H}_T|^2}{\tr T}.
\end{equation*}
Moreover, equality implies $p=2$, $\tr T$ is constant, and $M$ is $T$-minimally immersed into a geodesic sphere of radius $\arsinh_{\delta}\sqrt{\frac{m}{\lambda_{1}^{\Delta}}}$.
\end{thm}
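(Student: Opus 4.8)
The plan is to adapt the proof of Theorem \ref{thm1} (the $T=I$ case), feeding the tensor $T$ in through a $T$-weighted Hsiung--Minkowski inequality while keeping the $p$-Laplacian on the variational side. Fix $o\in N$ and set $r=d(o,\cdot)$; since $N$ is Cartan--Hadamard, $r$ is smooth away from $o$ with no cut locus. From the exponential chart at $o$ I would build the ``hyperbolic position'' map $\Phi=(\varphi_1,\dots,\varphi_n)\colon N\to\mathbb{R}^n$, $\varphi_a=\sd(r)\,\theta_a$, so that $|\Phi|^2=\sd(r)^2$, together with the auxiliary function $\cd(r)$. The Hessian comparison theorem for $K_N\le\delta$ supplies the cornerstones $\mathrm{Hess}_N\cd(r)\ge-\delta\,\cd(r)\,g_N$ and, after summing over $a$ (which cancels the signs of the individual $\theta_a$), $\sum_a\varphi_a\,\mathrm{Hess}_N\varphi_a\ge-\delta\,\sd(r)^2\,g_N$ and $\sum_a\nabla^N\varphi_a\otimes\nabla^N\varphi_a\le g_N+(-\delta)\sd(r)^2\,dr\otimes dr$. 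Restricting to $M$ by $\mathrm{Hess}_M(\psi|_M)=\mathrm{Hess}_N\psi+\langle\nabla^N\psi,A\rangle$ (with $A$ the vector-valued second fundamental form) turns these into control of $\Delta_M$, of $L_T$, and of $\sum_a|\nabla^M\varphi_a|^2\le m+(-\delta)\sd^2(1-|\partial_r^\perp|^2)$.

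On the variational side I would first carry out a center-of-mass/topological balancing that selects $o$ so that the components of $\Phi$ are admissible in the $p$-Rayleigh quotient \eqref{Ray}; the eigenvalue characterization then yields a bound of the form $\la_{1,p}\int_M|\Phi|^p\le n^{|p-2|/2}\int_M\big(\sum_a|\nabla^M\varphi_a|^2\big)^{p/2}$, the factor $n^{|p-2|/2}$ coming from the power-mean inequality comparing the $\ell^2$ and $\ell^p$ norms of $(|\nabla^M\varphi_1|,\dots,|\nabla^M\varphi_n|)\in\mathbb{R}^n$. Inserting the gradient estimate and treating the integrand under the $p/2$-power by convexity (for $p\ge2$) or concavity (for $1<p\le2$) is precisely what splits the two displayed cases and produces the tangent-line coefficient $\tfrac{p}{2}\delta$ in \eqref{eq-case2'}, whereas for $p\ge2$ one keeps the full factor $\delta+Q(T)$ in \eqref{eq-case1'}.

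The heart of the matter is the lower bound for $\int_M|\Phi|^p$ in terms of $H_T$. Applying $L_T$ to $\cd(r)$, integrating, and using $\int_M L_T(\cdot)=0$ (valid since $\dive T=0$) gives a $T$-weighted Hsiung--Minkowski inequality $\int_M\cd\,\tr T\le-\int_M\sd\,\langle\partial_r^\perp,H_T\rangle$; a weighted Cauchy--Schwarz in the splitting $\sd|H_T||\partial_r^\perp|=\big(\sd|\partial_r^\perp|\sqrt{\tr T}\big)\big(|H_T|/\sqrt{\tr T}\big)$ then yields $\big(\int_M\cd\,\tr T\big)^2\le\big(\int_M\sd^2|\partial_r^\perp|^2\tr T\big)\big(\int_M|H_T|^2/\tr T\big)$, from which a lower bound for $\int_M\sd^2|\partial_r^\perp|^2$ (hence for $\int_M|\Phi|^p$) controlled by $\vol(M)/Q(T)$ emerges once $\sup_M\tr T$ is factored out. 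This is exactly where hypotheses (a)/(b) enter: reconciling the $I$-weighted gradient term $(-\delta)\sd^2|\partial_r^\perp|^2$ with the $T$-weighted curvature term forces a pointwise inequality of the shape $(\tr T)(\cdots)-2\,T(\partial_r^\perp,\partial_r^\perp)(\cdots)\ge0$; hypothesis (a), $(\tr T)I-2T\ge0$, gives $T(\partial_r^\perp,\partial_r^\perp)\le\tfrac12\tr T$ directly, while hypothesis (b) controls the radial coefficient $\sqrt{-\delta}\,\sd\,\cd$ (the convex-radius bound is $\sinh(2\sqrt{-\delta}\,r)\le1$), so the inequality survives using only $T(\partial_r^\perp,\partial_r^\perp)\le\tr T$.

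Finally I would combine the upper bound $\la_{1,p}\int_M|\Phi|^p\le n^{|p-2|/2}\int_M(\sum_a|\nabla^M\varphi_a|^2)^{p/2}$ with this lower bound and the identity $\cd^2=1+(-\delta)\sd^2$, keeping every inequality tight, to reach \eqref{eq-case1'}--\eqref{eq-case2'}; specializing to $T=I$ (so $Q(I)=\vol(M)^{-1}\int_M|\mathbf{H}|^2$) must recover Theorem \ref{thm1}, which is a useful consistency check on the constants. For the equality discussion I would trace the chain backwards: equality in the power-mean step forces all $|\nabla^M\varphi_a|$ equal and $p=2$; equality in the Cauchy--Schwarz/Hsiung--Minkowski steps forces $|\partial_r^\perp|\equiv1$ (so $r$ is constant, i.e. $M\subset S(o,R)$) with $H_T$ radial and $R=\arsinh_{\delta}\sqrt{m/\lambda_1^{\Delta}}$; equality in the $\sup_M\tr T$ estimate forces $\tr T$ constant; and the comparison estimates are equalities only in the umbilic model, giving the claimed $T$-minimal immersion into the geodesic sphere. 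I expect the two main obstacles to be (i) the balancing argument producing \emph{simultaneously} admissible test functions for the nonlinear $p$-Rayleigh quotient, which requires a degree/fixed-point argument rather than the linear center of mass that suffices when $p=2$, and (ii) isolating and proving the precise pointwise weighted inequality governed by (a)/(b) while keeping the constants sharp enough for the equality analysis to close.
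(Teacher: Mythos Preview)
Your outline follows the Heintze--Grosjean strategy with test functions $\varphi_a=\sd(r)\,\theta_a$, but this is precisely the approach that \emph{fails} to deliver the integral bound when $\delta<0$; the whole point of the Niu--Xu argument that the paper extends is to replace $\sd$ by $\Th=\sd/\cd$. With $\phi=\sd$ the gradient comparison gives $\sum_a|\nabla\varphi_a|^2\le m+(-\delta)\sd^2|\nabla r|^2$, an upper bound that \emph{exceeds} $m$ and carries an awkward tangential term; with $\phi=\Th$ one gets instead $\sum_a|\nabla(\Th\, x_a/r)|^2\le m(1+\delta\Th^2)=m/\cd^2\le m$, and this built-in factor $1+\delta\Th^2$ is exactly where the negative curvature enters the final inequality. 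Your Hsiung--Minkowski step $\int_M\cd\,\tr T\le-\int_M\sd\,\langle\partial_r^\perp,H_T\rangle$ is correct, and after Cauchy--Schwarz and $\cd\ge1$ it does yield $\int_M\sd^2\ge\vol(M)/Q(T)$; but feeding this into the $\sd$-Rayleigh estimate only produces $\lambda_{1,2}\le mQ(T)$, not $m(\delta+Q(T))$ --- the $\delta$ is thrown away the moment you replace $\cd$ by $1$, and the identity $\cd^2=1-\delta\sd^2$ cannot be reinserted afterwards.

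The paper instead proves the $\Th$-version $\int_M(\tr T)\Th^2\cdot\int_M|H_T|^2/\tr T\ge(\int_M\tr T)^2$ (Proposition~\ref{prop2}), and this is where the real work and the hypotheses (a)/(b) actually live. Taking $Z=\Th\,\nabla^Nr$ in the divergence inequality produces an extra term $\delta\,T(Z^\top,Z^\top)$ (tangential, not normal as you wrote --- $T$ is a tensor on $M$, so $T(\partial_r^\perp,\partial_r^\perp)$ is not even defined), and upgrading the resulting bound on $\|\sqrt{\tr T}\,Z^\perp\|_2$ to one on $\|\sqrt{\tr T}\,Z\|_2$ requires a monotonicity argument for the quadratic $x\mapsto(h+c\delta x)x$ on the interval $[\,\|X^\perp\|_2,\|X\|_2\,]$. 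Condition (a) gives $T(Z^\top,Z^\top)\le\tfrac12(\tr T)|Z^\top|^2$ and lets one take $c=\tfrac12$; condition (b) forces $\cd^2\le2$ (your numerical reformulation $\sinh(2\sqrt{-\delta}R)\le1$ is equivalent, but it is used to bound $\Th^2$, not $\sd\cd$), which shrinks $\|X\|_2$ enough to run the argument with $c=1$. This quadratic-function step is the genuinely new ingredient beyond Heintze/Grosjean, and your proposal does not contain it.
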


By taking $p=2$, we immediately obtain the following
\begin{cor}
Assumptions as in Theorem \ref{thm2}. We have
\begin{equation}\label{eq-1.10}
	\lambda_{1}^{\Delta}\le m\left( \delta+\frac{\vol (M) \sup_M \tr T}{\left(\int_M \tr T\right)^2}\int_M \frac{|\mathbf{H}_T|^2}{\tr T}\right) .
\end{equation}
\end{cor}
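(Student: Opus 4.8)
The plan is to specialize Theorem \ref{thm2} to the single value $p=2$, at which the $p$-Laplacian reduces to the ordinary Laplacian. As noted in the introduction, $\la_{1,2}=\lambda_1^{\Delta}$, so I would simply read off the $p=2$ instance of the bound and verify that all $p$-dependent prefactors collapse to the expression claimed in \eqref{eq-1.10}. The quantity $Q(T)$ itself carries no dependence on $p$, so the only simplification needed is in the constants multiplying it.

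Concretely, at $p=2$ one has $|p-2|/2=0$, so $n^{|p-2|/2}=1$, and $m^{p/2}=m$. Using the branch \eqref{eq-case1'} (valid for $p\ge 2$, hence at the endpoint $p=2$), the remaining factor $Q(T)^{p/2-1}$ becomes $Q(T)^{0}=1$, so the right-hand side equals $m\bigl(\delta+Q(T)\bigr)$; unwinding the definition of $Q(T)$ gives precisely \eqref{eq-1.10}.

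The step I would treat as a sanity check rather than an obstacle is confirming that the two branches of Theorem \ref{thm2} agree at their common endpoint. In the branch \eqref{eq-case2'} (valid for $1<p\le 2$), at $p=2$ the factor $(-\delta)^{p/2-1}$ collapses to $1$ and $\tfrac{p}{2}\delta$ becomes $\delta$, so this bound also reads $m\bigl(\delta+Q(T)\bigr)$. Since $p=2$ is the unique value lying in both ranges, the two estimates must coincide there, and this coincidence is automatic. There is no real difficulty in this corollary: it follows immediately from Theorem \ref{thm2} by routine simplification of the constants, with no new geometric input required. The one point worth stating explicitly, were I to include more detail, is that the hypotheses (a)/(b) and the positivity and divergence-free conditions on $T$ are inherited verbatim from Theorem \ref{thm2}.
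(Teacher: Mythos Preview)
Your proposal is correct and matches the paper's own reasoning: the corollary is stated immediately after Theorem \ref{thm2} with the sentence ``By taking $p=2$, we immediately obtain the following,'' and your verification that both branches \eqref{eq-case1'} and \eqref{eq-case2'} collapse to $m(\delta+Q(T))$ at $p=2$ is exactly the intended one-line specialization.
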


\begin{rem}
(1) The condition $(\tr T) I-2T\ge 0$ is also required in \cite[Theorem 1.1]{CW19a}.

(2) When $m\ge 2$, $(\tr T) I-2T\ge 0$ always holds for $T=I$ the identity operator, and then Theorem \ref{thm2} recovers Theorem \ref{thm1}; furthermore, this recovers \cite[Theorem 1.1]{NX21} when $p=2$.
However, when $m=1$, even for the  Laplacian operator, $(\tr T) I-2T\ge 0$ fails, while an analogous estimate involving the curvature of the curve also fails unless ``smallness'' assumed (see \cite{NX21} for details).
\end{rem}

\subsection{$L_T$ operator}
Consider a symmetric $(1,1)$-tensor $T=(T_{ij})$ on $M$ as previously mentioned. If we additionally assume that $T$ is positive definite and divergence-free, then we can define a self-adjoint elliptic operator $L_T$ associated to $T$ by
\begin{equation*}
	L_Tu=\dive(T\nabla u).
\end{equation*}
The eigenvalue equation is given by $L_Tu=-\lambda u$.
It is also a generalization of the Laplacian since $L_T=\Delta$ when $T=I$.

The $L_T$ operator has been intensively studied. Regarding the Reilly-type inequality, Grosjean proved that
\begin{thm}[\cite{Gro04}*{Theorem 1}]\label{thm-gros}
	Let $M$ be an $m(\geq 2)$-dimensional  closed orientable submanifold in an $n$-dimensional  Riemannian manifold $N$ of sectional curvature $K_N\leq \delta$. If $\delta\leq 0$, we assume that $N$ is simply-connected;  if $\delta>0$,  we assume that $M$ is contained in a convex ball of radius $\leq \pi/4\sqrt{\delta}$.
	Let $T$ be a symmetric, positive definite and divergence-free $(1,1)$-tensor on $M$.
	Then the first non-zero eigenvalue $\la_{1,T}$ of $L_T$ on $M$ satisfies
	\begin{equation}\label{eq-gros}
		\la_{1,T}\le \frac{\sup_M|H_T|^2+\sup_M \delta(\tr T)^2}{\inf_M\tr T}.
	\end{equation}

	If the equality holds, then $M$ is contained in a geodesic sphere.
\end{thm}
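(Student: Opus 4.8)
The plan is to adapt the classical test-function method of Reilly and Heintze to the operator $L_T$, replacing the coordinate functions of a space-form immersion by radial functions built from the distance in $N$. Fix a point $p\in N$, write $r=d(p,\cdot)$, and let $\theta\colon N\setminus\{p\}\to S^{n-1}\subset T_pN$ send $x$ to the initial unit tangent of the minimal geodesic from $p$ to $x$. With respect to an orthonormal basis $\{E_1,\dots,E_n\}$ of $T_pN$ I define $f_i=\sd(r)\langle\theta,E_i\rangle$ on $M$, so that $\sum_i f_i^2=\sd(r)^2$. First I would invoke the variational characterization $\lambda_{1,T}=\inf\{\int_M\langle T\nabla u,\nabla u\rangle/\int_M u^2 : \int_M u=0\}$, apply it to each $f_i$, and sum over $i$, obtaining $\lambda_{1,T}\int_M\sd(r)^2\le\sum_i\int_M\langle T\nabla f_i,\nabla f_i\rangle$, provided the $f_i$ are admissible.

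Second, the admissibility $\int_M f_i=0$ for all $i$ must be arranged by a Hersch-type centring. I would consider $\mathcal H(p)=\int_M\cd(d(p,x))\,dV(x)$: the Hessian comparison below shows that $\mp\cd\circ r$ is strictly convex (the sign depending on the sign of $\delta$), so on the relevant region $\mathcal H$ is a proper strictly convex function and has a unique minimiser $p_0$. The vanishing of $\nabla\mathcal H$ at $p_0$ is precisely $\int_M\sd(r)\theta\,dV=0$, that is $\int_M f_i=0$ for every $i$. Establishing this centring --- convexity, properness, and the diffeomorphism property of $\exp_{p_0}$ on a Cartan--Hadamard manifold (or on the convex ball when $\delta>0$) --- is the main technical obstacle.

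Third, I would draw two consequences from $K_N\le\delta$ via the comparison $\operatorname{Hess}^N r\ge\frac{\cd(r)}{\sd(r)}(g_N-dr\otimes dr)$. For the energy: pulling back the flat metric along $x\mapsto\sd(r)\theta$ gives, on $TM$, $\sum_i\nabla f_i\otimes\nabla f_i\le g_M-\delta\,\sd(r)^2\,dr\otimes dr$, whence, using $T\ge0$, one finds $\sum_i\langle T\nabla f_i,\nabla f_i\rangle\le\tr T-\delta\,\sd(r)^2\,\langle T(\nabla r)^\top,(\nabla r)^\top\rangle$. For the curvature term: with $u=\tfrac12\sd(r)^2$ and $\dive T=0$, the relation $\operatorname{Hess}^M u=\operatorname{Hess}^N u+\langle\nabla^N u,A(\cdot,\cdot)\rangle$ gives $L_Tu\ge\cd(r)^2\tr T-\delta\,\sd(r)^2\langle T(\nabla r)^\top,(\nabla r)^\top\rangle+\langle\nabla^N u,H_T\rangle$; integrating over the closed $M$ annihilates the left side, and since $\nabla^N u=\sd(r)\cd(r)\nabla r$ with $H_T$ normal, this produces $\int_M\cd(r)^2\tr T-\delta\int_M\sd(r)^2\langle T(\nabla r)^\top,(\nabla r)^\top\rangle\le\int_M\sd(r)\cd(r)|H_T|$.

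Finally, combining the summed Rayleigh inequality, the two estimates above, and the identity $\cd(r)^2=1-\delta\,\sd(r)^2$ yields $\lambda_{1,T}\int_M\sd(r)^2\le\int_M\sd(r)\cd(r)|H_T|+\delta\int_M\sd(r)^2\tr T$. A Cauchy--Schwarz/Young estimate together with the pointwise replacement of the integrands by $\sup_M|H_T|^2$, $\sup_M\delta(\tr T)^2$ and $\inf_M\tr T$ then delivers \eqref{eq-gros}; this passage to suprema and infima is exactly the lossy step that the sharper integral arguments of the present paper later avoid, and I expect controlling that loss to be the second main difficulty. For the rigidity statement, equality must propagate back through Cauchy--Schwarz and the Hessian comparison, forcing $(\nabla r)^\top\equiv0$ on $M$; hence $r$ is constant on $M$ and $M$ lies in a geodesic sphere centred at $p_0$.
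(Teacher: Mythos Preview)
This theorem is quoted from Grosjean \cite{Gro04} and is \emph{not} proved in the present paper; it appears only as background that the paper's Theorem~\ref{thm-LT} later sharpens. There is therefore no proof here to compare against directly. That said, the ingredients you use are precisely those the paper records in Lemma~\ref{lem2.1} (which is attributed to \cite{Gro04}): your energy bound is Lemma~\ref{lem2.1}(1) with $\phi=\sd$, and your divergence inequality for $u=\tfrac12\sd^2$ is Lemma~\ref{lem2.1}(2) with $\phi=\sd\cd$. So the skeleton of your argument is the correct Heintze--Grosjean one.

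Two points of comparison and one genuine gap. First, for the centring step you minimise $\mathcal H(p)=\int_M\cd(d(p,\cdot))$ via convexity; this is Heintze's original device and is perfectly valid here. The present paper instead proves the existence of a ``good'' base point by a topological index argument (Lemma~\ref{lem2}), because for the $p$-Laplacian test functions $|\frac{\Th}{r}x_i|^{p-2}\frac{\Th}{r}x_i$ there is no obvious convex functional to minimise. Second, Grosjean himself takes $\phi=\sd$ in the divergence estimate (giving $\int_M\cd\,\tr T\le\int_M\sd\,|H_T|$), whereas you take $\phi=\sd\cd$; both choices lead to the same combined inequality $\lambda_{1,T}\int_M\sd^2\le\int_M\sd\cd\,|H_T|+\delta\int_M\sd^2\,\tr T$ after using $\cd^2=1-\delta\sd^2$, so this is cosmetic.

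The genuine gap is your last paragraph: ``a Cauchy--Schwarz/Young estimate together with the pointwise replacement \dots then delivers \eqref{eq-gros}'' is not yet a proof. From your combined inequality one still has to eliminate $\int_M\sd\cd\,|H_T|$ in favour of $\int_M\sd^2$, and a bare Cauchy--Schwarz produces a factor $(\int_M\cd^2|H_T|^2)^{1/2}$ that does not reduce to $\sup_M|H_T|$ times something controlled. The clean way (as in \cite{Gro04}) is to feed the divergence inequality back in to bound $\int_M\sd^2$ from below, or equivalently to square the relation $\inf_M(\tr T)\int_M\cd\le\int_M\sd\,|H_T|\le\sup_M|H_T|\int_M\sd$ and combine it with $\lambda_{1,T}\int_M\sd^2\le\int_M\cd^2\,\tr T$ (which follows from your energy bound using $\langle T\nabla r,\nabla r\rangle\le\tr T$ and $1-\delta\sd^2=\cd^2$). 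You should make this step explicit; as written it is the one place where the argument is not complete. The rigidity discussion at the end is correct.
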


When $N$ is the space form with constant curvature $\delta$, the first author and Wang \cite{CW19a} showed that the upper bound in \eqref{eq-gros} can be improved by the $L_2$-norm of $H_T$, precisely,
	\begin{equation}\label{eq-CW}
		\la_{1,T}\le \frac{1}{\vol(M)}\int_M\Big(\delta \tr T+\frac{|H_T|^2}{\tr T}\Big).
	\end{equation}
We remark that in this case, there is no restriction on the radius of $M$, but it is required that $(\tr T )I - 2T\ge 0$, i.e., positive semidefinite. The sufficient  and necessary conditions for the equality holding are also given.

Motivated by Niu-Xu \cite{NX21} again, we can also improve the estimate \eqref{eq-gros} for $\delta<0$. Precisely, we prove the following theorem.
\begin{thm}\label{thm-LT}
	Let $M$ be an $m(\ge 2)$-dimensional closed orientable submanifold in an $n$-dimensional complete simply-connected Riemannian manifold $N$ of sectional curvature $K_N\le  \delta < 0$.
Let $T$ be a symmetric, positive definite and divergence-free $(1,1)$-tensor on $M$.
Suppose the one of the followings holds:

(a)
$(\tr T )I - 2T\ge 0$, i.e., positive semidefinite;

(b) $M$ is contained in a convex ball of radius $\le \frac{1}{2\sqrt{-\delta}}\arcosh\sqrt{2}$.

Then we have
\begin{equation}\label{eq-LT}
	\la_{1,T}\le  \frac{\sup_M\tr T}{\int_M\tr T}\int_M\Big(\delta \tr T+\frac{|H_T|^2}{\tr T}\Big).
\end{equation}
Moreover, equality implies $\tr T$ is constant and $M$ is $T$-minimally immersed into a geodesic sphere of radius $\arsinh_{\delta}\sqrt{\frac{\tr T}{\lambda_{1,T}}}$.
\end{thm}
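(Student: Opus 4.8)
The plan is to bound $\la_{1,T}$ from above through the variational characterization
\[
\la_{1,T}=\inf\left\{\frac{\int_M\l T\nabla u,\nabla u\r}{\int_M u^2}\ :\ u\in W^{1,2}(M)\setminus\{0\},\ \int_M u=0\right\},
\]
fed with a family of balanced test functions modelled on the hyperbolic space of curvature $\delta$. Fix $o\in N$, write $r=d(o,\cdot)$, and use geodesic polar coordinates, so each point is $\exp_o(r\theta)$ with $\theta\in T_oN$ a unit vector; set
\[
\psi_i=\sd(r)\,\theta_i\ (1\le i\le n),\qquad \psi_0=\tfrac{1}{\sqrt{-\delta}}\,\cd(r),
\]
where $\theta_i$ are the components of $\theta$ in a fixed orthonormal basis of $T_oN$. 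These obey the algebraic relations $\sum_{i=1}^n\psi_i^2=\sd(r)^2$ and $\psi_0^2=\sd(r)^2+\tfrac{1}{-\delta}$. The first key step is to choose $o$ so that $\int_M\psi_i=0$ for all $i$ at once. Since $N$ is Cartan--Hadamard, $\exp_o$ is a diffeomorphism and $o\mapsto\big(\int_M\psi_i\big)_i$ defines a vector field on $N$ pointing ``outward'' once $o$ is far from $M$; a Brouwer-degree (center-of-mass) argument then produces a zero, i.e. an admissible base point. This replaces the Hersch/conformal-group balancing available only in space forms.

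Next I record the two comparison ingredients coming from $K_N\le\delta<0$. Hessian comparison gives $\mathrm{Hess}_N\cd(r)\ge-\delta\,\cd(r)\,g$, hence $\mathrm{Hess}_N\psi_0\ge-\delta\,\psi_0\,g$; and the bound $g_r\ge\sd(r)^2 g_{\mathbb S^{n-1}}$ on geodesic spheres yields the quadratic-form inequality $\sum_{i=1}^n d\psi_i\otimes d\psi_i\le g+d\psi_0\otimes d\psi_0$ on $N$, with equality exactly in the space form. Restricting this to $TM$, contracting with $T\ge0$, summing the Rayleigh inequalities over $i$ and using $\sum_i\psi_i^2=\sd(r)^2$ gives
\[
\la_{1,T}\int_M\sd(r)^2\le\int_M\tr T+\int_M\l T\nabla\psi_0,\nabla\psi_0\r .
\]
Now $H_T$ is introduced by integrating the last term by parts through $L_T$: as $M$ is closed and $T$ divergence-free, $\int_M\l T\nabla\psi_0,\nabla\psi_0\r=-\int_M\psi_0\,\tr(T\,\mathrm{Hess}_M\psi_0)$ with $\tr(T\,\mathrm{Hess}_M\psi_0)=\tr(T\,\mathrm{Hess}_N\psi_0)+\l\nabla^N\psi_0,H_T\r$. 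Applying the Hessian comparison to the interior term and using $1+\delta\psi_0^2=\delta\,\sd(r)^2$ together with $\psi_0\nabla^N\psi_0=\sd(r)\cd(r)\nabla^N r$ reduces everything to the master estimate
\[
\la_{1,T}\int_M\sd(r)^2\le\int_M\delta\,\sd(r)^2\,\tr T-\int_M\sd(r)\cd(r)\,\l\nabla^N r,H_T\r .
\]

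The cross term must be treated keeping the full tensor $T$ (a naive scalar Cauchy--Schwarz is too lossy to reach the sharp bound): completing the square so as to extract $\tfrac{|H_T|^2}{\tr T}$ leaves a residual of the form $(\cd(r)^2-1)\,\l\big((\tr T)I-2T\big)(\nabla r)^\top,(\nabla r)^\top\r$, and this is exactly where hypotheses (a)/(b) enter. Under (a) the term $(\tr T)I-2T\ge0$ makes the residual have the favourable sign; under (b) the radius bound forces $\cd(r)^2-1=(-\delta)\sd(r)^2\le\tfrac{\sqrt2-1}{2}$, small enough to dominate the $2T$ contribution. For $T=I$ and $m\ge2$ this is automatic, since $(\tr T)I-2T=(m-2)I\ge0$, which is why Theorem~\ref{thm1} needs no such restriction. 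Finishing with a weighted Cauchy--Schwarz pairing $\sd(r)\sqrt{\tr T}$ against $\cd(r)|H_T|/\sqrt{\tr T}$, estimating the weight by $\sup_M\tr T$ and dividing out $\int_M\sd(r)^2$ (controlled below by $\int_M\tr T$) produces the factor $\frac{\sup_M\tr T}{\int_M\tr T}$ and the asserted bound \eqref{eq-LT}.

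I expect the two main obstacles to be the balancing in the first step (no conformal group is available on a general Cartan--Hadamard $N$, so one must run a degree/center-of-mass argument) and the bookkeeping of the comparison error in the third step, where the constant in (b) has to be matched precisely to the $2T$ appearing under (a); the remaining manipulations are integration by parts and Young's inequality. For the equality statement I would run the chain backwards: equality saturates the Rauch comparison (so $N$ is isometric to the model along the geodesics meeting $M$ and $M$ lies in a geodesic sphere $S(o,R)$), saturates the Cauchy--Schwarz (so $H_T\parallel\nabla r$, i.e. $M$ is $T$-minimal in $S(o,R)$), and forces $\tr T$ to be constant; evaluating the master estimate on $S(o,R)$ then yields $\sd(R)=\sqrt{\tr T/\la_{1,T}}$, i.e. the stated radius $\arsinh_\delta\sqrt{\tr T/\la_{1,T}}$.
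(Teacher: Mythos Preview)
Your overall strategy---balance a family of radial test functions, apply Rayleigh's quotient, and feed in the curvature hypothesis via comparison---matches the paper, but two concrete choices diverge from it and create a real gap.

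First, the paper does \emph{not} use the hyperboloid functions $\psi_i=\sd(r)\theta_i$; it uses the bounded functions $\frac{\Th(r)}{r}x_i$. With $\phi=\Th$ one gets, directly from Lemma~\ref{lem2.1}(1), the clean estimate
\[
\la_{1,T}\int_M\Th^2\;\le\;\int_M(\tr T)\bigl(1+\delta\,\Th^2\bigr),
\]
which contains \emph{no} $H_T$-term at all. The generalized mean curvature enters only through a separate inequality, Proposition~\ref{prop2}:
\[
\int_M\frac{|H_T|^2}{\tr T}\int_M(\tr T)\,\Th^2\;\ge\;\Bigl(\int_M\tr T\Bigr)^2,
\]
proved by applying the divergence theorem to $T(\Th\nabla^Nr)^\top$ and then analysing the monotonicity of a quadratic $x\mapsto(h+\frac{\delta}{2}x)x$ (or $x\mapsto(h+\delta x)x$ under (b)). The hypotheses (a)/(b) are used \emph{only} in this proposition, to guarantee that both $\|X\|_2$ and $\|X^\perp\|_2$ lie on the increasing branch of the quadratic. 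Combining the two displays and using $\int(\tr T)\Th^2\le(\sup_M\tr T)\int\Th^2$ gives \eqref{eq-LT} immediately.

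Second, your route via $\psi_0=\frac{1}{\sqrt{-\delta}}\cd(r)$ and integration by parts does produce the ``master estimate'' you wrote, but the cross term $-\int_M\sd\,\cd\,\langle(\nabla^Nr)^\perp,H_T\rangle$ carries the \emph{unbounded} factor $\cd(r)$, and I do not see how any pointwise completion of the square turns it into $\int_M|H_T|^2/\tr T$ with only bounded remainders. The residual $(\cd^2-1)\langle((\tr T)I-2T)(\nabla r)^\top,(\nabla r)^\top\rangle$ that you mention is the object that appears in the paper's $\Th$-based divergence identity (this is exactly where (a)/(b) enter in Proposition~\ref{prop2}), not in your $\sd$-based one; under (b) one only gets $\cd^2-1\le1$, not $\tfrac{\sqrt{2}-1}{2}$, so the numerical claim is also off. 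Finally, ``$\int_M\sd(r)^2$ controlled below by $\int_M\tr T$'' is not true a priori (if $M$ is close to $q_0$, $\sd(r)$ can be arbitrarily small), so the division step at the end is unjustified. In short, the $\sd$-test-function approach is essentially Heintze--Grosjean and yields sup-norm bounds; the upgrade to the integral bound \eqref{eq-LT} genuinely needs the $\Th$-test functions together with the quadratic-function trick of Proposition~\ref{prop2}, which your outline is missing.
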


\smallskip

There are other Reilly-type inequalities in various situations, for example, when the ambient Riemannian manifolds $(N, g_N)$ is endowed with a density $e^{-f}$ for certain $f\in C^\infty(N)$, one can consider the weighted  $L_T$ operator, denoted by $L_{T, f}$; when the submanifold has a non-empty boundary, one can consider the first (non-zero) eigenvalues of the Steklov-type problems (see \cites{Wan09,  IM11, Rot16, CS18, Rot20, DMZ21, MRU21}).
We also obtain some new estimates and list them in the last section.

This paper is organized as follows. In Sect.~2, we list some notations and show some lemmas and propositions, some of which are extensions from the mean curvature to $H_T$.
In Sect.~3 and Sect.~4, we give the proofs for the $p$-Laplacian and $L_T$, respectively.
To estimate the upper bound of the first (non-zero) eigenvalues, a starting point is the Rayleigh-type variational characterization (cf. Eqs. \eqref{Ray}  and \eqref{Ray-T}).
One of key ingredients is to find suitable test functions (Sect.~\ref{Sec: test-func}), and the other one is an estimate of $L^{2}$ lower bound of $H_T$, which extends Niu-Xu's estimate for the mean curvature (Sect.~\ref{Sec: L2}).
In Sect.~5, we give some Reilly-type estimates when the ambient manifold is a weighted manifold.

\textbf{Acknowledgment}:
The first author was supported by Shaanxi Fundamental Science Research Project for Mathematics and Physics Grant No.~22JSQ005 and Natural Science Foundation of Shaanxi Province Grant No.~2020JQ-101.

\section{Preliminaries}
In this section, we introduce some notations and review some lemmas.
\subsection{Some estimates for radial vector fields}
Let $M$ be an $m$-dimensional submanifold of an $n$-dimensional Riemannian manifold $N$ whose sectional curvature $K_N\le \delta<0$.
We denote the Levi-Civita connection on $M$ and $N$ by $\nabla$ and $\nabla^N$ respectively.
For any $q\in N$, we consider its normal coordinates $(x_1,\cdots,x_n)$ on $N$ centered at some fixed point $q_0\in N$.

Let $\sd(y)=\frac{1}{\sqrt{-\delta}}\sinh(\sqrt{-\delta}y)$, $\cd(y)=\sd'(y)$ and $\Th(y)=\sd(y)/\cd(y)$.
It is easy to check that $\cd^2+\delta\sd^2=1$ and $1/\cd^2=1+\delta\Th^2$.

Consider the vector field $Z=\phi(r)\nabla^N r$, where $r=r(q)$ is the distance function from $q_0$ to $q$ and $\phi(r)$ be a positive function with $\phi(r) \sim r$ as $r\to 0$.
Then the coordinates of $Z$ in the normal local frame are $\Big(\dfrac{\phi(r)}{r}x_i\Big)_{1\leq i\leq n}$.
By using basic properties of the exponential map and the comparison for Jacobi fields, we can prove the following lemma.
\begin{lem}[cf. \cites{Gro04, NX21}]\label{lem2.1} Let $T$ be a symmetric, positive definite and divergence-free $(1,1)$-tensor on $M$, and $Z^\top$ be the tangential projection of $Z=\phi(r)\nabla^Nr$ to $M$.
Then we have

	(1) $\sum_{i=1}^n\big\l T\nabla\big(\frac{\phi(r)}{r}x_i\big),\nabla\big(\frac{\phi(r)}{r}x_i\big)\big\r\le(\tr T)\frac{\phi^2}{\sd^2}+ \Big((\phi')^2-\frac{\phi^2}{\sd^2}\Big)\l T\nabla r,\nabla r\r$, and equality holds if $N$ has constant sectional curvature $\delta$.

	(2) $\dive_M(TZ^\top)\ge (\tr T) \frac{\phi}{\Th}+\Big(\phi'-\frac{\phi}{\Th}\Big)\l T\nabla r,\nabla r\r+\l Z, H_T\r$.
\end{lem}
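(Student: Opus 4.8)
The plan is to reduce both inequalities to pointwise comparison estimates on the ambient manifold $N$, which are available because $K_N\le\delta<0$, and then to transfer them to $M$ by contracting against the positive definite tensor $T$. I would first fix a point $q$ and choose a $g_N$-orthonormal frame $\{E_1,\dots,E_n\}$ of $T_qN$ with $E_1=\nabla^N r$ and $E_2,\dots,E_n$ tangent to the geodesic sphere $\{r=\mathrm{const}\}$. Writing $u_i=\frac{\phi(r)}{r}x_i$ for the coordinate components of $Z$, I regard $(u_1,\dots,u_n)$ as the radial map $F=\phi(r)\,x/|x|$ (recall $|x|=r$ in normal coordinates) and compute its differential. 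Along the radial direction one gets $|dF(E_1)|^2=(\phi')^2$, with $dF(E_1)$ orthogonal to each $dF(E_a)$; along each spherical direction the Jacobi-field (Rauch) comparison for $K_N\le\delta$ bounds the angular stretching, giving $|dF(E_a)|^2\le\phi^2/\sd^2$, with equality in constant curvature $\delta$. Interpreting these as a bound on the Gram form of the $\nabla^N u_i$, I obtain the pointwise quadratic-form inequality
\[
\sum_i\langle\nabla^N u_i,v\rangle^2\le\frac{\phi^2}{\sd^2}|v|^2+\Big((\phi')^2-\frac{\phi^2}{\sd^2}\Big)\langle\nabla^N r,v\rangle^2\qquad(v\in T_qN).
\]

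To finish (1), I would extend $T$ to a symmetric operator $\tilde T$ on $T_qN$ acting as $T$ on $T_qM$ and as $0$ on the normal space, so $\tilde T\ge0$; a one-line check gives $\langle T\nabla u_i,\nabla u_i\rangle=\langle\tilde T\nabla^N u_i,\nabla^N u_i\rangle$, since $\tilde T$ annihilates normal components. Contracting the displayed inequality against $\tilde T$ (i.e.\ evaluating it on an $\tilde T$-eigenbasis weighted by the nonnegative eigenvalues) and using $\tr\tilde T=\tr T$ together with $\langle\tilde T\nabla^N r,\nabla^N r\rangle=\langle T\nabla r,\nabla r\rangle$ yields exactly $(\tr T)\frac{\phi^2}{\sd^2}+\big((\phi')^2-\frac{\phi^2}{\sd^2}\big)\langle T\nabla r,\nabla r\rangle$, with equality when $N$ has constant curvature $\delta$.

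For (2), I would start from the divergence-free hypothesis, which gives $\dive_M(TZ^\top)=\sum_i\langle\nabla_{e_i}Z^\top,Te_i\rangle$ for a tangent orthonormal frame $\{e_i\}$. Writing $Z^\top=Z-Z^\perp$ and differentiating $Z=\phi(r)\nabla^N r$ in $N$, the radial part contributes $\phi'\langle T\nabla r,\nabla r\rangle$, the covariant-Hessian part contributes $\phi\sum_{i,j}T_{ij}\,\mathrm{Hess}^N r(e_i,e_j)$, and differentiating the normal piece $Z^\perp$ produces the term $\sum_{i,j}T_{ij}\langle Z,A(e_i,e_j)\rangle=\langle Z,H_T\rangle$. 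Applying the Hessian comparison $\mathrm{Hess}^N r\ge\frac{1}{\Th}(g_N-dr\otimes dr)$ and the same positive-semidefinite contraction (now on $T_qM$) bounds the Hessian term below by $\frac{\phi}{\Th}\big(\tr T-\langle T\nabla r,\nabla r\rangle\big)$; collecting the three contributions gives $(\tr T)\frac{\phi}{\Th}+\big(\phi'-\frac{\phi}{\Th}\big)\langle T\nabla r,\nabla r\rangle+\langle Z,H_T\rangle$, as claimed.

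I expect the main obstacle to be bookkeeping rather than any deep new idea: one must extract the two comparison estimates (Jacobi-field control of the angular stretch for (1), Hessian control of the radial second derivative for (2)) with the correct constants $\phi^2/\sd^2$ and $1/\Th$ and in the correct inequality direction, and then carefully track tangential-versus-normal projections when restricting to $M$ and contracting with $T$. The positive definiteness of $T$ is precisely what legitimizes the step $\tr(\tilde T C)\ge0$ for $\tilde T,C\ge0$, and the constant-curvature case is exactly the equality case of both comparisons, which is what underlies the equality statement in (1).
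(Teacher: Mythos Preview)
Your proposal is correct, but it proceeds differently from the paper. The paper's proof is a reduction to the already-known special case $\phi=\sd$: for (1) it expands $\nabla\big(\tfrac{\phi}{r}x_i\big)$ by the product rule, reorganizes the resulting sum so that the bracket $\big[\tfrac{\sd^2}{r^2}\sum_i\langle T\nabla x_i,\nabla x_i\rangle+(1-\tfrac{\sd^2}{r^2})|\sqrt{T}\nabla r|^2\big]$ appears, and then bounds this bracket by $\tr T$ via Grosjean's Lemma~1; for (2) it writes $TZ^\top=\tfrac{\phi}{\sd}\cdot T(\sd\,\nabla r)$, applies the Leibniz rule to the divergence, and invokes Grosjean's Lemma~2 on the $\sd$-factor. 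You instead work from scratch: you view $(u_1,\dots,u_n)$ as a map $F:N\to\mathbb{R}^n$, use the Rauch comparison to bound $|dF|^2$ in spherical directions by $\phi^2/\sd^2$, obtain the pointwise quadratic-form inequality, and then contract against the positive-semidefinite extension $\tilde T$; for (2) you differentiate $Z=\phi(r)\nabla^N r$ directly and apply the Hessian comparison $\mathrm{Hess}^N r\ge \Th^{-1}(g_N-dr\otimes dr)$ traced against $T\ge 0$. Your route is more self-contained (it re-derives the needed comparison estimates rather than citing them) and makes transparent exactly where positivity of $T$ and $\phi$ enter; the paper's route is shorter on the page because the comparison work is packaged in the citations. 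Both are standard and equivalent in strength, and the equality discussion in (1) is the same in either approach.
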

\begin{proof}
	When $\phi=\sd$, Items (1) and (2) go back to  Lemma 1 and Lemma 2 in \cite{Gro04} respectively, and the proof of this lemma follows from the same steps with similar calculations in \cite{Gro04}.
	Here we give a quick proof by directly using the results in \cite{Gro04}.

	Since $$\nabla\big(\frac{\phi(r)}{r}x_i\big)=\big(\frac{\phi}{r}\big)'x_i\nabla r+\frac{\phi}{r}\nabla x_i,\quad  \sum_{i=1}^n(x_i)^2=r^2, \quad\sum_{i=1}^nx_i\nabla x_i=r\nabla r,$$ we have
	\begin{align*}
		\sum_{i=1}^n\big\l T\nabla\big(\frac{\phi(r)}{r}x_i\big),\nabla\big(\frac{\phi(r)}{r}x_i\big)\big\r=&\sum_{i=1}^n\frac{\phi^2}{r^2}\l T\nabla x_i,\nabla x_i\r+\Big(\big(\frac{\phi}{r}\big)'^2r^2+2\phi\big(\frac{\phi}{r}\big)'\Big)|\sqrt{T}\nabla r|^2\\
		=&\sum_{i=1}^n\frac{\phi^2}{r^2}\l T\nabla x_i,\nabla x_i\r+\Big((\phi')^2-\frac{\phi^2}{r^2}\Big)|\sqrt{T}\nabla r|^2\\
		=&(\phi')^2|\sqrt{T}\nabla r|^2-\frac{\phi^2}{\sd^2}|\sqrt{T}\nabla r|^2\\
		&+\frac{\phi^2}{\sd^2}\left[\frac{\sd^2}{r^2}\sum_{i=1}^n\l T\nabla x_i,\nabla x_i\r+(1-\frac{\sd^2}{r^2})|\sqrt{T}\nabla r|^2\right]\\
		\le & (\phi')^2|\sqrt{T}\nabla r|^2-\frac{\phi^2}{\sd^2}|\sqrt{T}\nabla r|^2+\frac{\phi^2}{\sd^2}(\tr T),
	\end{align*}
	where we used \cite[Eqs. (5) and (6)]{Gro04} in the last inequality.
	\begin{align*}
		\dive_M(TZ^\top)=&\dive_M\Big(\frac{\phi}{\sd}T(\sd \nabla r)\Big)\\
		=&\frac{\phi}{\sd}\dive_M(T(\sd \nabla r))+\big(
		\frac{\phi}{\sd}\big)'\l \nabla r, T(\sd\nabla r)\r\\
		\ge &  \frac{\phi}{\sd}\Big((\tr T)\cd +\l \sd  \nabla^N r, H_T\r\Big)+\frac{\phi'\sd-\phi\cd}{\sd}\l \nabla r, T\nabla r\r\\
		= &  (\tr T) \frac{\phi}{\Th}+\Big(\phi'-\frac{\phi}{\Th}\Big)\l T\nabla r,\nabla r\r+\l Z, H_T\r,
	\end{align*}
where we used \cite[Lemma 2]{Gro04} in the inequality.
\end{proof}

\begin{rem}
	When $T=I$, Lemma \ref{lem2.1} recovers \cite[Lemma 2.3]{NX21}.
\end{rem}

\subsection{Test functions}\label{Sec: test-func}
The next lemma will provide suitable test functions for the upper bound estimates of the first eigenvalue, and the basic idea can be found in \cite{Cha78, Hei88}.
	\begin{lem}\label{lem2}
		Under the same assumptions as in Theorem \ref{thm1} or Theorem \ref{thm2}, we have for $p>1$, there exists a point $q_0 \in N$ such that
		\begin{equation}
			\int_M \Big| \frac{\Th(r)}{r} x_i \Big|^{p-2} \frac{\Th(r)}{r} x_i\, dv_M = 0 \, (i=1,\cdots,n).
		\end{equation}
		Here $(x_1,\cdots,x_n)$ is the normal coordinate around $q_0$, and $r(q)=d(q_0,q)$ is the geodesic distance from $q_0$ to $q$ on $N$.
	\end{lem}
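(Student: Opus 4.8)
The plan is to produce the center $q_0$ by a topological argument, viewing the $n$ scalar conditions as the components of a single vector-valued map and showing that this map is forced to vanish somewhere. First fix a base point $o\in N$. Since $N$ is Cartan--Hadamard ($K_N\le\delta<0$, complete, simply connected), $\exp_o\colon T_oN\to N$ is a diffeomorphism, so $N$ compactifies to a closed ball $\bar N\cong \bar B^n$ with ideal boundary $\partial_\infty N\cong S^{n-1}$. Parallel-transporting a fixed orthonormal basis of $T_oN$ along radial geodesics yields a continuous frame $\{e_i(q_0)\}$ at every $q_0$; for $q\in M$ set $x_i(q)=\langle \exp_{q_0}^{-1}(q),e_i(q_0)\rangle$ and $r(q)=|\exp_{q_0}^{-1}(q)|$, and define
\[
\mathcal F\colon N\to\mathbb R^n,\qquad \mathcal F_i(q_0)=\int_M\Big|\tfrac{\Th(r)}{r}x_i\Big|^{p-2}\tfrac{\Th(r)}{r}x_i\,dv_M .
\]
Because $|x_i|\le r$ and $\Th(r)\le 1/\sqrt{-\delta}$, the integrand is bounded by $(1/\sqrt{-\delta})^{p-1}$, so $\mathcal F$ is well defined and continuous in $q_0$ by dominated convergence; a zero of $\mathcal F$ is exactly the point $q_0$ we seek.

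The heart of the proof is the asymptotic behaviour of $\mathcal F$ as $q_0\to\partial_\infty N$. Let $q_0=\exp_o(t\theta)$ with $\theta\in S^{n-1}$ and $t\to\infty$. I would use Cartan--Hadamard comparison to show that, uniformly for $q$ in the compact set $M$, the unit vector $\exp_{q_0}^{-1}(q)/r$ read in the frame $\{e_i(q_0)\}$ converges to $-\theta$, while $\Th(r)\to 1/\sqrt{-\delta}$. Hence $\tfrac{\Th(r)}{r}x_i\to -\theta_i/\sqrt{-\delta}$ uniformly on $M$, and therefore
\[
\mathcal F_i(q_0)\longrightarrow \vol(M)\,(-\delta)^{-(p-1)/2}\,|\theta_i|^{p-2}(-\theta_i).
\]
In particular $\mathcal F$ is nonzero for $q_0$ near the boundary, and $\langle\mathcal F(q_0),\theta\rangle$ is asymptotically proportional to $-\sum_i|\theta_i|^{p}<0$, i.e. $\mathcal F$ points \emph{inward} on large geodesic spheres.

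To conclude, fix $R$ large and let $\Sigma_R=\{r(o,\cdot)=R\}$ bound the ball $B_R$. By the previous step $\mathcal F$ is nonvanishing and inward-pointing along $\Sigma_R$, so the straight-line homotopy into the reversed radial direction shows that $\mathcal F/|\mathcal F|\colon\Sigma_R\to S^{n-1}$ is homotopic to the antipodal map and thus has degree $(-1)^n\ne 0$. If $\mathcal F$ had no zero in $B_R$, then $\mathcal F/|\mathcal F|$ would extend continuously over $\bar B_R$, which would force its boundary restriction to have degree $0$ — a contradiction. Hence $\mathcal F(q_0)=0$ for some $q_0\in B_R$, which is precisely the asserted orthogonality $\int_M|\tfrac{\Th(r)}{r}x_i|^{p-2}\tfrac{\Th(r)}{r}x_i\,dv_M=0$ for all $i$.

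I expect the main obstacle to be the boundary analysis, namely the uniform convergence $\exp_{q_0}^{-1}(q)/r\to-\theta$ on $M$ as $q_0\to\infty$. This is exactly where the Cartan--Hadamard hypothesis is indispensable: $\exp$ is a diffeomorphism without conjugate points, and by Rauch/Toponogov-type comparison the geodesics joining a far point $q_0$ to the fixed compact $M$ become asymptotically parallel to the reversed $o\to q_0$ ray, so all the radial directions align. Once this uniform limit is in hand, the finiteness, continuity, and degree steps are routine; the nonlinearity $|\cdot|^{p-2}(\cdot)$ only enters through an odd, coordinatewise-monotone factor that does not affect the sign of $\langle\mathcal F,\theta\rangle$ and hence does not disturb the degree computation.
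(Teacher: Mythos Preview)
Your argument is correct and shares the paper's core idea: build a continuous vector-valued map from the $n$ integrals and use a degree/index argument to force a zero. The paper, however, works on a fixed convex ball $B\supset M$ and simply asserts that the resulting vector field $Y$ ``points into the interior of $B$'' along $\partial B$; this is immediate when $p=2$ (the map is then a genuine average of inward-pointing vectors), but for $p\neq 2$ the coordinatewise nonlinearity $t\mapsto |t|^{p-2}t$ is not $O(n)$-equivariant, and the inward-pointing claim depends on the frame and is not justified in the paper. Your route avoids this difficulty by pushing $q_0$ to infinity: since all points of $M$ asymptotically lie in the single direction $-\theta$ from $q_0$, every coordinate $\tfrac{\Th(r)}{r}x_i$ has a definite sign in the limit, and the inward-pointing of $\mathcal F$ on large spheres is transparent for every $p>1$. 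The cost is the extra comparison-geometry step (the uniform convergence $\exp_{q_0}^{-1}(q)/r\to -\theta$, which does follow from CAT$(0)$ angle comparison together with the fact that the radial geodesic is parallel in the transported frame); the benefit is that the subsequent degree computation is fully justified without any frame-alignment issues.
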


	\begin{proof}
		We can assume $M$ lies in a convex ball $B\subset N$.
		For each $q\in N$, we define a vector $Y_q\in T_qN$ under the normal coordinate system around $q$ as follows:
		\begin{equation*}
			Y_q:=\Big(\int_M \Big| \frac{\Th(r)}{r} x_i \Big|^{p-2} \frac{\Th(r)}{r} x_1\, dv_M,\cdots,\int_M \Big| \frac{\Th(r)}{r} x_i \Big|^{p-2} \frac{\Th(r)}{r} x_n\, dv_M \Big).
		\end{equation*}
		Hence, we obtain a vector field $Y$ in a neighborhood of $B$ which points at the boundary into the interior of $B$.

		Now suppose $Y(q)\neq 0$ for all $q\in B$, which means $\Ind(Y)=0$.
		Since $Y$ is a continuous vector field on $B$ and $Y(q)$ points inside for all $q\in \partial B$, we have (cf. \cite{Mor29, Got88})
		\begin{equation*}
			\Ind(Y) +\chi(\partial B)=\chi(B),
		\end{equation*}
		so $\Ind(Y)=1-(1+(-1)^{n-1})=(-1)^{n}\neq 0$, a contradiction. The proof is complete.
	\end{proof}

\begin{rem}\label{rem2.2}
	From the proof we see that $q_0\in B$.
	If the radius of $B$ $\le
	\frac{1}{2\sqrt{-\delta}}\arcosh\sqrt{2}$, then $M$ lies in a ball of radius $\le \frac{1}{\sqrt{-\delta}}\arcosh\sqrt{2}$ centered at $q_0$, which implies $\cd^2(r)\le 2$ on $M$.
\end{rem}

\subsection{$L^2$ lower bound of $H_T$}\label{Sec: L2}
Recall the following key estimate used to prove \eqref{neg1}.
\begin{prop}[\cite{NX21}*{Proposition 2.1}]\label{prop1}
	Under the same assumptions as in Theorem \ref{thm1}, we have
	\begin{equation}\label{eq2.3}
		\int_M |\mathbf{H}|^2  \int_M \Th^2(r) \ge  (\vol(M))^2.
	\end{equation}
Moreover, equality in \eqref{eq2.3} implies that $M$ is minimally immersed in a geodesic sphere.
\end{prop}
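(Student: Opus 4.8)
The plan is to feed Lemma~\ref{lem2.1}(2) with $T=I$ and the radial profile $\phi=\Th$ into the divergence theorem and then to play the tangential and normal parts of $\nabla^Nr$ against each other. With $T=I$ one has $\tr T=m$, $H_I=m\mathbf H$, $\phi/\Th=1$ and $\phi'-\phi/\Th=\Th'-1=\delta\Th^2$ (using $\Th'=1/\cd^2=1+\delta\Th^2$), so Lemma~\ref{lem2.1}(2) reads
\begin{equation*}
	\dive_M\big(\Th(r)(\nabla^Nr)^\top\big)\ge m+\delta\Th^2|\nabla r|^2+m\,\Th\,\l\nabla^Nr,\mathbf H\r .
\end{equation*}
Since $M$ is closed the left-hand side integrates to zero, and writing $\nabla^Nr=\nabla r+\nabla^\perp r$ with $\mathbf H$ normal gives $\l\nabla^Nr,\mathbf H\r=\l\nabla^\perp r,\mathbf H\r\ge-|\nabla^\perp r|\,|\mathbf H|$. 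Hence
\begin{equation*}
	m\,\vol(M)+\delta\int_M\Th^2|\nabla r|^2\le m\int_M\Th\,|\nabla^\perp r|\,|\mathbf H| .
\end{equation*}

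Next I would apply Cauchy--Schwarz to the right-hand side and record the two natural pieces of $\int_M\Th^2$. Set $u=\int_M\Th^2|\nabla r|^2$, $v=\int_M\Th^2|\nabla^\perp r|^2$, $b=u+v=\int_M\Th^2$, $a=\int_M|\mathbf H|^2$ and $V=\vol(M)$; using $|\nabla^\perp r|^2=1-|\nabla r|^2$ the Cauchy--Schwarz inequality yields
\begin{equation*}
	mV+\delta u\le m\Big(\int_M\Th^2|\nabla^\perp r|^2\Big)^{1/2}a^{1/2}=m\,(va)^{1/2}.
\end{equation*}
Keeping the \emph{normal} gradient $\nabla^\perp r$ here is essential: bounding $\l\nabla^Nr,\mathbf H\r\ge-|\mathbf H|$ directly would replace $v$ by $b$ and give only $ba\ge((m-1)/m)^2V^2$. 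The crucial structural input is the uniform bound $\Th^2=\tfrac1{-\delta}\tanh^2(\sqrt{-\delta}\,r)\le \tfrac1{-\delta}$, so that $b=\int_M\Th^2\le V/(-\delta)$; this forces $mV+\delta u\ge mV+\delta b\ge (m-1)V>0$, so squaring is legitimate and $m^2va\ge(mV+\delta u)^2$.

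The main point is then a short algebraic step showing that this single inequality, together with $b\le V/(-\delta)$ and $m\ge2$, already yields $ba\ge V^2$. Indeed, multiplying $m^2va\ge(mV+\delta u)^2$ by $b$ and comparing with $m^2V^2v=m^2V^2(b-u)$, one computes
\begin{equation*}
	b(mV+\delta u)^2-m^2V^2(b-u)=u\big(m^2V^2+b\delta^2u+2bmV\delta\big)\ge u\,\big(m^2V^2-2bmV(-\delta)\big)\ge u\,m(m-2)V^2\ge0,
\end{equation*}
where the last two inequalities use $b(-\delta)\le V$ and $m\ge2$. Thus $m^2bva\ge m^2V^2v$, and since $v>0$ (if $v=0$ then $|\nabla r|\equiv1$, and the integrated inequality gives $mV\le(-\delta)b\le V$, impossible for $m\ge2$) we conclude $ba=\int_M|\mathbf H|^2\int_M\Th^2\ge V^2$. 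I expect the bookkeeping in this algebraic step to be the delicate part, and it is here that $m\ge2$ (in place of Heintze's convexity-radius hypothesis) is exactly what absorbs the curvature deficit $\delta\int_M\Th^2|\nabla r|^2$.

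Finally, equality is analyzed by tracing the chain backwards: it forces the bracket above and the Cauchy--Schwarz estimate to be sharp, whence $u=\int_M\Th^2|\nabla r|^2=0$, i.e.\ $r\equiv R$ is constant on $M$; the equality $\l\nabla^\perp r,\mathbf H\r=-|\nabla^\perp r||\mathbf H|$ then forces $\mathbf H$ to be everywhere normal to the geodesic sphere $\{r=R\}$ with $|\mathbf H|$ constant, which is precisely the assertion that $M$ is minimally immersed in that geodesic sphere.
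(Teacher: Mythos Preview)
Your proof is correct and shares all the essential ingredients with the paper's argument (which appears as the $T=I$ specialization of the proof of Proposition~\ref{prop2}): Lemma~\ref{lem2.1}(2) with $\phi=\Th$, the divergence theorem, Cauchy--Schwarz split into tangential and normal parts, the pointwise bound $\Th^2\le 1/(-\delta)$, and the hypothesis $m\ge 2$. The only real difference is the organization of the algebraic endgame. The paper recasts the integrated inequality as $\phi_h(\|X^\perp\|_2)\ge \int_M\tr T+\tfrac{\delta}{2}\|X\|_2^2$ for the quadratic $\phi_h(x)=(h+\tfrac{\delta}{2}x)x$ with $h=\|H_T/\sqrt{\tr T}\|_2$, then shows that both $\|X^\perp\|_2$ and $\|X\|_2$ lie in the increasing range $[0,h/(-\delta)]$ of $\phi_h$, whence $\phi_h(\|X\|_2)\ge\phi_h(\|X^\perp\|_2)$ yields the claim. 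You instead square, expand the identity
\[
b(mV+\delta u)^2-m^2V^2v=u\big(m^2V^2+b\delta^2u+2bmV\delta\big),
\]
and bound the bracket below by $m(m-2)V^2$ using $b(-\delta)\le V$. These are equivalent rearrangements of the same estimate; your version has the minor advantage, in the $T=I$ case, of not first weakening $\delta\|Z^\top\|_2^2$ to $\tfrac{m\delta}{2}\|Z^\top\|_2^2$ (a step the paper makes to accommodate general $T$ under hypothesis~(a)). Your equality analysis matches the paper's: $u=0$ forces $r\equiv R$, and sharpness in Cauchy--Schwarz forces $\mathbf H$ to be radial, hence $M$ is minimal in the geodesic sphere.
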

We can extend Proposition \ref{prop1} to $H_T$ as follows.
\begin{prop}\label{prop2}
	Under the same assumptions as in Theorem \ref{thm2}, we have 
	\begin{equation}\label{eq2.4}
		\int_M \frac{|H_T|^2}{\tr T}  \int_M
		\tr T\Th^2(r) \ge  \Big(\int_M\tr T\Big)^2.
	\end{equation}
Moreover, equality in \eqref{eq2.4} implies $M$ is $T$-minimally immersed into a geodesic sphere.
\end{prop}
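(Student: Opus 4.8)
My plan is to reduce \eqref{eq2.4} to a single first-order integral inequality and then close with Cauchy--Schwarz, mirroring the proof of Proposition \ref{prop1}. Splitting the integrand of $\int_M\Th(r)\,|H_T|$ as $\big(\sqrt{\tr T}\,\Th\big)\cdot\big(|H_T|/\sqrt{\tr T}\big)$, the target \eqref{eq2.4} is exactly what one obtains by squaring the intermediate estimate
\[
\int_M \tr T \;\le\; \Big(\int_M \tr T\,\Th^2(r)\Big)^{1/2}\Big(\int_M \frac{|H_T|^2}{\tr T}\Big)^{1/2}.
\]
So it suffices to establish this bound. A good consistency check is that at the conjectural equality configuration (where $r\equiv\rho$ is constant and $\Th\,|H_T|=\tr T$ pointwise) it becomes an identity, which tells me this is the right thing to aim for.

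To produce the intermediate bound I would apply Lemma \ref{lem2.1}(2) with the choice $\phi=\Th$. Using $\Th'=1/\cd^2=1+\delta\Th^2$ one gets $\phi'-\phi/\Th=\delta\Th^2$ and $\phi/\Th=1$, so with $Z=\Th\,\nabla^N r$ Lemma \ref{lem2.1}(2) reads $\dive_M(TZ^\top)\ge \tr T+\delta\,\Th^2\l T\nabla r,\nabla r\r+\l Z,H_T\r$. Integrating over the closed manifold $M$ annihilates the left-hand side and gives
\[
\int_M \tr T+\delta\int_M \Th^2\l T\nabla r,\nabla r\r\;\le\;-\int_M \Th\,\l\nabla^N r,H_T\r .
\]
Since $H_T$ is normal, only the normal part of $\nabla^N r$ enters the last integrand, so I would use the \emph{sharp} pointwise bound $-\l\nabla^N r,H_T\r\le|(\nabla^N r)^\perp|\,|H_T|=\sqrt{1-|\nabla r|^2}\,|H_T|$ rather than the crude $|H_T|$; this refinement is essential because it is saturated precisely when $M$ lies in a geodesic sphere. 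Inserting it on the right and applying Cauchy--Schwarz with weight $\sqrt{\tr T}$ yields
\[
\int_M \tr T+\delta\int_M \Th^2\l T\nabla r,\nabla r\r\;\le\;\Big(\int_M \tr T\,\Th^2(1-|\nabla r|^2)\Big)^{1/2}\Big(\int_M \frac{|H_T|^2}{\tr T}\Big)^{1/2}.
\]

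The hard part is that the curvature term $\delta\int_M\Th^2\l T\nabla r,\nabla r\r$ is negative (as $\delta<0$ and $T$ is positive definite), so it weakens the left-hand side, while at the same time the factor $(1-|\nabla r|^2)\le1$ shrinks the right-hand side; neither can be thrown away for free, and this bookkeeping is where I expect the real difficulty to lie. This is exactly the point at which hypotheses (a) and (b) must enter, playing the role that $m\ge2$ plays for Proposition \ref{prop1}: under (a) one has $\l T\nabla r,\nabla r\r\le\tfrac12(\tr T)|\nabla r|^2$, while under (b) Remark \ref{rem2.2} gives $\cd^2\le2$, i.e. $-\delta\Th^2\le\tfrac12$. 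In either case the two $|\nabla r|^2$-weighted contributions can be played against one another (a Young/Cauchy--Schwarz balancing in which the $|\nabla r|^2$ terms cancel), and I expect the clean estimate $\int_M\tr T\le(\int_M\tr T\,\Th^2)^{1/2}(\int_M|H_T|^2/\tr T)^{1/2}$ to drop out; squaring then gives \eqref{eq2.4}. For the equality statement I would trace the chain backwards: equality forces all the inequalities above to be sharp, in particular $|\nabla r|\equiv0$, so $r$ is constant and $M$ lies in a geodesic sphere; saturation of Cauchy--Schwarz forces $\tr T$ to be constant with $\Th\,|H_T|=\tr T$, and the vanishing of the sphere-tangential part of $H_T$ yields the claimed $T$-minimality.
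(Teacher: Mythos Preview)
Your setup is essentially the paper's: taking $\phi=\Th$ in Lemma~\ref{lem2.1}(2), integrating, and applying Cauchy--Schwarz with the weight $\sqrt{\tr T}$ reproduces exactly the paper's inequality~\eqref{eq2.6}. The genuine gap is the step you describe as ``a Young/Cauchy--Schwarz balancing in which the $|\nabla r|^2$ terms cancel''. No such direct cancellation exists, and this is precisely where the substantive work lies.

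Concretely, set $X=\sqrt{\tr T}\,Z$ and $h=\big\|H_T/\sqrt{\tr T}\big\|_2$. In case~(a) you have
\[
h\,\|X^\perp\|_2 \;\ge\; \int_M\tr T+\tfrac{\delta}{2}\|X^\top\|_2^2,
\]
and what you want is $h\,\|X\|_2\ge\int_M\tr T$. The paper recasts the target as $\phi_h(\|X\|_2)\ge\int_M\tr T+\tfrac{\delta}{2}\|X\|_2^2$ for the concave quadratic $\phi_h(x)=(h+\tfrac{\delta}{2}x)x$, and reduces everything to the monotonicity statement $\phi_h(\|X\|_2)\ge\phi_h(\|X^\perp\|_2)$. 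This in turn requires $\|X\|_2+\|X^\perp\|_2\le \tfrac{2h}{-\delta}$, which the paper obtains by combining the crude pointwise bound $\Th^2\le 1/(-\delta)$ (so $\|X\|_2^2\le\tfrac{1}{-\delta}\int_M\tr T$) with the basic inequality~\eqref{eq2.6'} \emph{again}, via
\[
(\|X\|_2+\|X^\perp\|_2)\|X^\perp\|_2 \;\le\; 2\|X\|_2^2-\|X^\top\|_2^2 \;\le\; \tfrac{2}{-\delta}\Big(\int_M\tr T+\tfrac{\delta}{2}\|X^\top\|_2^2\Big) \;\le\; \tfrac{2h}{-\delta}\,\|X^\perp\|_2.
\]
Case~(b) runs the same way with $\psi_h(x)=(h+\delta x)x$ and the sharper bound $\Th^2\le\tfrac{1}{-2\delta}$ supplied by Remark~\ref{rem2.2}. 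These two ingredients --- the a priori bound on $\Th^2$ and the re-use of the basic divergence inequality --- are exactly what your sketch is missing; without them the $|\nabla r|^2$ contributions do \emph{not} balance out by any straightforward Young-type trick.

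One minor point on the equality discussion: Proposition~\ref{prop2} does not claim that $\tr T$ is constant, and saturation of your Cauchy--Schwarz step only yields $|H_T|$ proportional to $\tr T$, not constancy of $\tr T$ itself.
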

\begin{proof}
	By taking $\phi(r)=\Th(r)$ in Lemma \ref{lem2.1}, we have $Z=\Th(r)\nabla^Nr$ and
	\begin{equation}\label{eq2.5}
		\dive_M(TZ^\top)\ge \tr T +\delta|\sqrt{T}Z^\top|^2+\l Z^\perp, H_T\r.
	\end{equation}
	Integrating \eqref{eq2.5} over $M$ and using the divergence theorem, the Cauchy-Schwarz inequality and the H\"{o}lder inequality, we have
	\begin{equation}\label{eq2.6}
		\|\sqrt{\tr T}Z^\bot\|_2 \left\|\frac{H_T}{\sqrt{\tr T}}\right\|_2 \ge \int_M-\l Z^\perp, H_T\r\ge \int_M \tr T + \delta\|\sqrt{T}Z^\top\|_2^2,
	\end{equation}
where $\|Y\|_2:=(\int_M|Y|^2)^{1/2}$ for some vector field  $Y$, and $^\perp$ represents the normal part of the vector field.

We discuss each case.

\textbf{Case (a): $(\tr T) I-2T\ge 0$. } Let $\Lambda_T$ be the largest eigenvalue of $T$, then $\Lambda_T\le \frac{1}{2}\tr T$,
which implies $|\sqrt{T}Z^\top|^2\le \Lambda_T|Z^\top|^2\le \frac{1}{2}\tr T|Z^\top|^2$, and it follows from \eqref{eq2.6} that
\begin{equation}\label{eq2.6'}
		\|\sqrt{\tr T}Z^\bot\|_2 \left\|\frac{H_T}{\sqrt{\tr T}}\right\|_2 \ge \int_M \tr T + \frac{\delta}{2}\|\sqrt{T}Z^\top\|_2^2.
\end{equation}
Denote $X=\sqrt{\tr T}Z$ and note that $\|X^\top\|_2^2=\|X\|_2^2 -\|X^\bot\|_2^2$, then \eqref{eq2.6'} becomes
	\begin{equation}\label{eq2.7}
		\left( \left\|\frac{H_T}{\sqrt{\tr T}}\right\|_2 + \frac{\delta}{2}\|X^\bot\|_2 \right)\|X^\bot\|_2 \geq \int_M \tr T +  \frac{\delta}{2}\|X\|_2^2.
	\end{equation}

Now we consider the quadratic function $\phi_h(x)=(h+ \frac{\delta}{2} x)x$ with $h=\left\|\frac{H_T}{\sqrt{\tr T}}\right\|_2$.
Note that \eqref{eq2.4} is equivalent to
\begin{equation*}
		\phi_h(\|X\|_2)=\left( h +  \frac{\delta}{2}\|X\|_2 \right)\|X\|_2 \geq \int_M \tr T +  \frac{\delta}{2}\|X\|_2^2,
\end{equation*}
hence, to prove the lemma, it is sufficient to show that
\begin{equation}\label{eq2.8}
	\phi_h(\|X\|_2)\ge\phi_h(\|X^\perp\|_2).
\end{equation}
Next we verify \eqref{eq2.8}.
Firstly,  we have
	\begin{equation}\label{eq2.9}
		2\|X\|_2^2 = 2 \int_M \tr T \Th^2 = \frac{1}{-\delta}\int_M 2\tr T \left( 1-\frac{1}{\cd^2} \right) \le \frac{2}{-\delta}\int_M \tr T.
	\end{equation}
Therefore, we obtain that
	\begin{align*}
		&( \|X\|_2 + \|X^\bot\|_2 )\|X^\bot\|_2\\
		\overset{\phantom{(2.9)}}{=} & \|X\|_2\|X^\bot\|_2 + \|X\|_2^2 - \|X^\top\|_2^2
		\leq 2\|X\|_2^2 - \|X^\top\|_2^2\\
		\overset{\eqref{eq2.9}}{\leq}& \frac{2}{-\delta}\int_M \tr T - \|X^\top\|_2^2
		= \frac{2}{-\delta} \left( \int_M \tr T + \frac{\delta}{2}\|X^\top\|_2^2 \right)\\
		\overset{\eqref{eq2.6'}}{\leq}& \frac{2h}{-\delta} \|X^\bot\|_2.
	\end{align*}
Since $\|X^\bot\|_2\neq 0$, we conclude that $ 2\|X^\bot\|_2\le \|X\|_2 + \|X^\bot\|_2\le \frac{2h}{-\delta}$.
Hence, \eqref{eq2.8} holds by using the monotonicity of $\phi_h=(h+\frac{\delta}{2} x)x$.

\textbf{Case (b): $M\subset B(R), R\le \frac{1}{2\sqrt{-\delta}}\arcosh\sqrt{2}$. } We still denote $X=\sqrt{\tr T}Z$, then \eqref{eq2.6} becomes
\begin{equation}\label{eq2.7'}
	\left( \left\|\frac{H_T}{\sqrt{\tr T}}\right\|_2 + \delta\|X^\bot\|_2 \right)\|X^\bot\|_2 \geq \int_M \tr T +  \delta\|X\|_2^2.
\end{equation}Next we can use the analogous argument as in Case (a) for the quadratic function $\psi_h(x)=(h+ \delta x)x$ with $h=\left\|\frac{H_T}{\sqrt{\tr T}}\right\|_2$.

By Remark \ref{rem2.2} we have $\cd^2 \le2$ on $M$, then
\begin{equation}\label{eq2.9'}
	2\|X\|_2^2 = 2 \int_M \tr T \Th^2 = \frac{1}{-\delta}\int_M 2\tr T \left( 1-\frac{1}{\cd^2} \right) \le \frac{1}{-\delta}\int_M \tr T.
\end{equation}
Consequently, $ 2\|X^\bot\|_2\le \|X\|_2 + \|X^\bot\|_2\le \frac{h}{-\delta}$ follows from
\begin{equation*}
	( \|X\|_2 + \|X^\bot\|_2 )\|X^\bot\|_2
	\le
	2\|X\|_2^2 - \|X^\top\|_2^2
	\overset{\eqref{eq2.9'}}{\leq}
	\frac{1}{-\delta} \left( \int_M \tr T + \delta\|X^\top\|_2^2 \right)
	\overset{\eqref{eq2.6}}{\leq} \frac{h}{-\delta} \|X^\bot\|_2.
\end{equation*}
Now we derive that
\begin{equation*}
	\left( \left\|\frac{H_T}{\sqrt{\tr T}}\right\|_2 +  \delta\|X\|_2 \right)\|X\|_2 =\psi_h(\|X\|_2)\ge \psi_h(\|X^\perp\|_2)\overset{\eqref{eq2.7'}}{\geq}\int_M \tr T +  \delta\|X\|_2^2,
\end{equation*}
which is equivalent to \eqref{eq2.4}.

Finally, we check the necessity for equality in \eqref{eq2.4}.
In both cases (a) and (b), we must have $\|X^\bot\|_2=\|X\|_2$, which implies $X\equiv X^\bot$ and $X^\top\equiv 0$ along $M$.
Further, the first equality in \eqref{eq2.6} gives $X=\sqrt{\tr T} Z$ and $\frac{H_T}{\sqrt{\tr T}}$ are parallel, and $|H_T|=k\tr T|Z|$ for certain positive constant $k$.
On the other hand, equality in \eqref{eq2.5} implies $\tr T=|H_T||Z|=k\tr T |Z|^2$.
Hence, $\Th(r)=|Z|$ is constant on $M$, from which we conclude that $M$ lies in a geodesic sphere.
Since $H_T$ is parallel to $Z$ along $M$, we obtain that $M$ is $T$-minimal in this geodesic sphere.
We complete the proof.
\end{proof}

\begin{rem}\label{rem-radius}
	Assume that equality in \eqref{eq2.4} holds. If we additionally assume that $\tr T$ is constant, then  $|H_T|=k \tr T \Th(r)$ is also constant. Hence, from \eqref{eq2.4} we obtain $k=1/\Th^2(r)$ and $|H_T|=\tr T/\Th(r)$.
\end{rem}

\section{Proofs for $p$-Laplacian with $\delta<0$}
In this section, we prove Theorems \ref{thm1} and \ref{thm2}. Firstly, we give the following estimate for $\la_{1,p}$.
\begin{lem}\label{lem4}
	Let $ M $ be an $ m(\ge   2) $-dimensional closed orientable submanifold in an $ n $-dimensional Riemannian manifold $ N $. Then we have, for all $ p>1 $,

	\begin{equation}\label{eq4.0}
		\la_{1,p} \int_M \Th^p(r) \le  n^\frac{|p-2|}{2} m^\frac{p}{2} \int_M \Big( 1 + \delta\Th^2(r) \Big)^{p/2}.
	\end{equation}

	\begin{proof}
		We choose $ \frac{\Th(r)}{r} x_i $ in Lemma \ref{lem2} as the test functions. Recall the Rayleigh-type quotient \eqref{Ray},
		then we obtain
		\begin{equation}\label{eq4.1}
			\la_{1,p} \int_M \Big| \frac{\Th(r)}{r} x_i \Big|^p \le  \int_M \Big| \nabla ( \frac{\Th(r)}{r} x_i ) \Big|^p\quad  \mbox{for each $i$.}
		\end{equation}
		When $1<p\le 2$,
		\begin{equation}\label{eq4.2}
			\Th^p(r) = (\Th^2(r))^{p/2} = \Big( \sum_{i=1}^n \Big| \frac{\Th(r)}{r} x_i \Big|^2 \Big)^{p/2} \le  \sum_{i=1}^n \Big| \frac{\Th(r)}{r} x_i \Big|^p.
		\end{equation}
	By taking $T=I, f(r)=\Th(r)$ in Item (1) of Lemma \ref{lem2.1} and using the  H\"{o}lder inequality, we derive that
		\begin{align}
			n^{1-2/p} \Big( \sum_{i=1}^n \Big| \nabla \big( \frac{\Th(r)}{r} x_i \big) \Big|^p \Big)^\frac{2}{p} &\le  \sum_{i=1}^n \Big| \nabla \Big( \frac{\Th(r)}{r} x_i \Big) \Big|^2\nonumber\\
			& \le  m \Big( \frac{1}{\cd^2}+\delta \frac{\Th^2}{\cd^2} |\nabla r|^2 \Big)\nonumber\\
			& \le  m \Big( 1 + \delta\Th^2(r) \Big),\label{eq4.3}
		\end{align}
	where we used $\delta<0$ and $1=\cd^2+\delta\sd^2$.
		Then \eqref{eq4.0} is from \eqref{eq4.1}, \eqref{eq4.2} and \eqref{eq4.3}.

		When $ p\ge 2$, the H\"{o}lder inequality gives
		\begin{equation*}
			\Th^2(r) = \sum_{i=1}^n \Big| \frac{\Th(r)}{r} x_i \Big|^2 \le  n^{1-2/p} \Big( \sum_{i=1}^n \Big| \frac{\Th(r)}{r} x_i \Big|^p \Big)^{2/p},
		\end{equation*}
		so we have
		\begin{equation}\label{eq4.5}
			\la_{1,p} \int_M \Th^p(r) \le  n^{p/2-1} \Big( \sum_{i=1}^n \la_{1,p} \int_M \Big| \frac{\Th(r)}{r} x_i \Big|^p \Big).
		\end{equation}
		On the other hand, we have
		\begin{equation}\label{eq4.6}
			\sum_{i=1}^n \Big| \nabla ( \frac{\Th(r)}{r} x_i ) \Big|^p \le  \Big( \sum_{i=1}^n \Big| \nabla \Big( \frac{\Th(r)}{r} x_i \Big) \Big|^2 \Big)^{p/2} \le  m^\frac{p}{2} \Big( 1 + \delta\Th^2(r) \Big)^{p/2}.
		\end{equation}
		Hence, \eqref{eq4.0} follows from \eqref{eq4.1}, \eqref{eq4.5} and \eqref{eq4.6}.
	\end{proof}
\end{lem}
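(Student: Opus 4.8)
The plan is to feed the functions $f_i=\frac{\Th(r)}{r}x_i$ ($i=1,\dots,n$) supplied by Lemma~\ref{lem2} into the Rayleigh-type characterization \eqref{Ray} of $\la_{1,p}$. Lemma~\ref{lem2} furnishes a base point $q_0$ for which each $f_i$ meets the constraint $\int_M|f_i|^{p-2}f_i=0$, so every $f_i$ is an admissible test function and hence $\la_{1,p}\int_M|f_i|^p\le\int_M|\nabla f_i|^p$ for each $i$. Summing these $n$ inequalities reduces the whole statement to two pointwise comparisons: one between $\sum_i|f_i|^p$ and $\Th^p(r)$ on the left, and one between $\sum_i|\nabla f_i|^p$ and $m^{p/2}\big(1+\delta\Th^2(r)\big)^{p/2}$ on the right.

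For the right-hand comparison I would first control the sum of squared gradients. Item~(1) of Lemma~\ref{lem2.1} with $T=I$ and $\phi=\Th$ gives $\sum_i|\nabla f_i|^2\le m\big(\frac{1}{\cd^2}+\delta\frac{\Th^2}{\cd^2}|\nabla r|^2\big)$; since $\delta<0$ the last summand is non-positive and may be discarded, and the identity $1/\cd^2=1+\delta\Th^2$ then collapses this to $\sum_i|\nabla f_i|^2\le m(1+\delta\Th^2(r))$. For the left-hand side the natural starting point is the elementary identity $\sum_i f_i^2=\frac{\Th^2}{r^2}\sum_i x_i^2=\Th^2(r)$, coming from $\sum_i x_i^2=r^2$.

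The remaining work is to pass between the $\ell^p$ and $\ell^2$ norms of the vectors $(f_i)$ and $(|\nabla f_i|)$ over $\{1,\dots,n\}$, and this is exactly where the argument splits, because the direction of the power-mean (H\"older) inequality reverses at $p=2$. For $1<p\le2$ the subadditivity of $t\mapsto t^{p/2}$ yields $\Th^p\le\sum_i|f_i|^p$ directly, while on the gradient side $\sum_i|\nabla f_i|^p\le n^{1-p/2}\big(\sum_i|\nabla f_i|^2\big)^{p/2}$ produces the factor $n^{(2-p)/2}$. For $p\ge2$ the roles interchange: H\"older gives $\Th^p\le n^{p/2-1}\sum_i|f_i|^p$ on the left, whereas $\sum_i|\nabla f_i|^p\le\big(\sum_i|\nabla f_i|^2\big)^{p/2}$ needs no constant on the right. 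In both regimes the accumulated power of $n$ is precisely $n^{|p-2|/2}$, so after inserting the gradient bound and integrating over $M$ one lands on \eqref{eq4.0}. The main hazard — and the step I would recheck — is matching the exponent $|p-2|/2$ coming from the two opposite uses of H\"older, and making sure the sign $\delta<0$ is exploited in the favourable direction in the gradient estimate rather than against it.
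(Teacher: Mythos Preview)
Your proposal is correct and follows essentially the same route as the paper's proof: feed the test functions $f_i=\frac{\Th(r)}{r}x_i$ from Lemma~\ref{lem2} into the Rayleigh quotient \eqref{Ray}, sum over $i$, bound $\sum_i|\nabla f_i|^2$ via Lemma~\ref{lem2.1}(1) with $T=I$, $\phi=\Th$ (dropping the non-positive $\delta$-term and using $1/\cd^2=1+\delta\Th^2$), and then pass between $\ell^2$ and $\ell^p$ norms of $(f_i)$ and $(|\nabla f_i|)$ with the H\"older/power-mean inequalities, the direction splitting at $p=2$ and producing exactly the factor $n^{|p-2|/2}$. The only cosmetic difference is that the paper writes the gradient-side H\"older step for $1<p\le2$ in the form $n^{1-2/p}\big(\sum_i|\nabla f_i|^p\big)^{2/p}\le\sum_i|\nabla f_i|^2$, which is the same inequality you state rearranged.
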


Now we are ready to prove our theorems.
Since Theorem \ref{thm1} is a special case of Theorem \ref{thm2}, we only prove the latter.

\begin{proof}[Proof of Theorem \ref{thm2}]
We have $\delta\Th^2(r)=-\Tho^2(\sqrt{-\delta}r)\in (-1,0]$ since $\delta<0$.
From \eqref{eq2.4} we have
\begin{equation}\label{eq3.7}
	\sup_M \tr T \int_M \Th^2\int_M \frac{|H_T|^2}{\tr T}  \geq \int_M \tr T \Th^2 \int_M \frac{|H_T|^2}{\tr T} \geq \left(\int_M \tr T\right)^2.
\end{equation}
	\textbf{Case 1.}  When $ p \ge 2 $, we have
	\begin{align}
		\la_{1,p} \int_M \Th^p(r)
			& \le  n^\frac{|p-2|}{2} m^\frac{p}{2} \int_M \Big( 1 + \delta\Th^2(r) \Big)^{p/2}\nonumber\\
			& \le  n^\frac{|p-2|}{2} m^\frac{p}{2} \int_M \Big(1 + \delta\Th^2(r)\Big) \label{eq-Case1}\\
			& =  n^\frac{|p-2|}{2} m^\frac{p}{2}   \Big(\vol(M)+ \delta\int_M\Th^2(r) \Big)\nonumber\\
			&\le n^\frac{|p-2|}{2} m^\frac{p}{2} \left(\frac{\vol(M) \sup_M \tr T}{\left(\int_M \tr T\right)^2}\int_M \frac{|H_T|^2}{\tr T}+\delta\right) \int_M \Th^2(r),\label{eq3.8}
	\end{align}
where we used  \eqref{eq3.7} in the last inequality.
	By the H\"{o}lder inequality, we have
	\begin{equation*}
		\int_M \Th^2(r) \le  \Big( \int_M \Th^p(r) \Big)^{2/p} (\vol(M))^{1-2/p},
	\end{equation*}
	which is equivalent to
	\begin{equation}\label{eq3.9}
		(\vol(M))^{1-p/2} \Big( \int_M \Th^2(r) \Big)^{p/2} \le  \int_M \Th^p(r).
	\end{equation}
	Combining \eqref{eq3.9} with \eqref{eq3.8}, we have
	\begin{align*}
		\la_{1,p}
		&\le n^\frac{|p-2|}{2} m^\frac{p}{2} \left(\frac{\vol(M) \sup_M \tr T}{\left(\int_M \tr T\right)^2}\int_M \frac{|H_T|^2}{\tr T}+\delta\right)\left( \frac{\vol(M)}{\int_M \Th^2(r)} \right)^{p/2-1}\\
		& \le  n^\frac{|p-2|}{2} m^{p/2} \left(\frac{\vol(M) \sup_M \tr T}{\left(\int_M \tr T\right)^2}\int_M \frac{|H_T|^2}{\tr T}+\delta\right)\left( \frac{\vol(M) \sup_M \tr T}{\left(\int_M \tr T\right)^2}\int_M \frac{|H_T|^2}{\tr T} \right)^{p/2-1},
	\end{align*}
where we used \eqref{eq3.7} again. This is just \eqref{eq-case1'}.

\textbf{Case 2.} When $ 1<p \le  2 $, we have
\begin{align}
	\la_{1,p} \int_M \Th^p(r)
	& \le  n^\frac{|p-2|}{2} m^\frac{p}{2} \int_M \Big( 1 + \delta\Th^2(r) \Big)^{p/2}\nonumber\\
	& \le  n^\frac{|p-2|}{2} m^\frac{p}{2} \int_M \Big( 1 + \frac{p}{2} \delta\Th^2(r) \Big)\label{eq-Case2}\\
	& = n^\frac{|p-2|}{2} m^\frac{p}{2} \left( \vol(M) + \frac{p}{2}\delta \int_M \Th^2(r) \right)\nonumber\\
	& \le  n^\frac{|p-2|}{2} m^\frac{p}{2}  \left( \frac{\vol(M) \sup_M \tr T}{\left(\int_M \tr T\right)^2}\int_M \frac{|H_T|^2}{\tr T} + \frac{p}{2} \delta \right)\int_M\Th^2(r),\label{eq3.10}
\end{align}
where we used  \eqref{eq3.7} in the last inequality.
Notice that
\begin{equation*}
	\Th^p(r) = \Tho^p(\sqrt{-\delta}r) / (-\delta)^\frac{p}{2}
\end{equation*}
and
\begin{equation*}
	\Th^2(r) = \Tho^2(\sqrt{-\delta}r) / (-\delta) \le  \Tho^p(\sqrt{-\delta}r) / (-\delta),
\end{equation*}
so
\begin{equation*}
	\int_M \Th^2(r)\le  (-\delta)^{\frac{p}{2}-1}\int_M\Th^p(r) .
\end{equation*}
Inserting this into \eqref{eq3.10} we derive  \eqref{eq-case2'}.

Finally, we check what happens when equality holds in \eqref{eq-case1'} or \eqref{eq-case2'}. Equality in either \eqref{eq-Case1} or \eqref{eq-Case2} implies $p=2$; the first equality in \eqref{eq3.7} implies that $\tr T$ is constant, while the second equality in \eqref{eq3.7} implies that $M$ is $T$-minimally immersed into a geodesic sphere (cf. Proposition \ref{prop2}), and the radius can be determined by Remark \ref{rem-radius}.
Hence, we complete the whole proof.
\end{proof}

\section{Proof for $L_T$ operators with $\delta<0$}
In this section, we prove Theorem \ref{thm-LT}. Actually, we will prove the following stronger result.
\begin{thm}\label{thm-TS}
	Let $M$ be an $m(\ge 2)$-dimensional closed orientable submanifold in an $n$-dimensional complete simply-connected Riemannian manifold $N$ of sectional curvature $K_N\le  \delta < 0$.
	Let $T, S$ be a symmetric, positive definite and divergence-free $(1,1)$-tensor on $M$.
	Suppose the one of the followings holds:

	(a)
	$(\tr S )I - 2S\ge 0$, i.e., positive semidefinite;

	(b) $M$ is contained in a convex ball of radius $\le \frac{1}{2\sqrt{-\delta}}\arcosh\sqrt{2}$.

	Then we have the following two estimates for the upper bound of the first non-zero eigenvalue $\lambda_{1,T}$ of $L_T$:
	\begin{equation}\label{eq-TS1}
		\la_{1,T}\le \sup_M\Big(\frac{\tr T}{\tr S}\Big)\sup_M (\tr S) \Big(\delta+\frac{1}{\int_M\tr S}\int_M\frac{|H_S|^2}{\tr S}\Big),
	\end{equation}
and \begin{equation}\label{eq-TS2}
		\la_{1,T}\le \sup_M(\tr T) \Big(\delta +\frac{\vol(M) \sup_M\tr S}{(\int_M\tr S)^2}\int_M\frac{|H_S|^2}{\tr S}\Big).
\end{equation}
Moreover, equality in either \eqref{eq-TS1} or \eqref{eq-TS2} implies that both $\tr S$ and $\tr T$ are constant, and that $M$ is $S$-minimally immersed into a geodesic sphere of radius $\arsinh_{\delta}\sqrt{\frac{\tr T}{\lambda_{1,T}}}$.
\end{thm}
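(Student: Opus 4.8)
The plan is to feed the test functions of Lemma \ref{lem2} (in the case $p=2$) into the variational characterization \eqref{Ray-T} of $\la_{1,T}$, which for a closed $M$ reads
\begin{equation*}
	\la_{1,T}=\inf\left\{\left.\frac{\int_M\l T\nabla u,\nabla u\r}{\int_M u^2}\,\right|\,u\in W^{1,2}(M)\setminus\{0\},\ \int_M u=0\right\}.
\end{equation*}
Writing $u_i=\frac{\Th(r)}{r}x_i$, Lemma \ref{lem2} with $p=2$ produces a center $q_0$ for which $\int_M u_i=0$ for every $i$, so each $u_i$ is admissible. My first goal is to prove the single clean inequality
\begin{equation*}
	\la_{1,T}\int_M\Th^2(r)\le\int_M\tr T\,\bigl(1+\delta\Th^2(r)\bigr),
\end{equation*}
from which both \eqref{eq-TS1} and \eqref{eq-TS2} follow by elementary manipulations together with Proposition \ref{prop2} applied to $S$.

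To obtain the clean inequality I apply \eqref{Ray-T} to each $u_i$, namely $\la_{1,T}\int_M u_i^2\le\int_M\l T\nabla u_i,\nabla u_i\r$, and sum over $i$. On the left, $\sum_i u_i^2=\bigl(\frac{\Th(r)}{r}\bigr)^2\sum_i x_i^2=\Th^2(r)$. On the right I invoke Item (1) of Lemma \ref{lem2.1} with $\phi=\Th$. Using the identities of Section 2 one computes $\Th'=1/\cd^2$ and $\Th^2/\sd^2=1/\cd^2$, so the bound becomes
\begin{equation*}
	\sum_i\l T\nabla u_i,\nabla u_i\r\le\frac{\tr T}{\cd^2}+\frac{\delta\sd^2}{\cd^4}\l T\nabla r,\nabla r\r\le\frac{\tr T}{\cd^2}=\tr T\,(1+\delta\Th^2),
\end{equation*}
where the middle term is discarded because $\delta<0$ and $T$ positive definite force $\frac{\delta\sd^2}{\cd^4}\l T\nabla r,\nabla r\r\le0$, and the last equality uses $1/\cd^2=1+\delta\Th^2$. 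Integrating over $M$ yields the clean inequality.

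From here the two estimates diverge. For \eqref{eq-TS2} I bound $\tr T\le\sup_M\tr T$ (legitimate since $1+\delta\Th^2=1/\cd^2>0$), getting $\la_{1,T}\int_M\Th^2\le\sup_M\tr T\,(\vol(M)+\delta\int_M\Th^2)$; inequality \eqref{eq3.7} applied to $S$ gives $\int_M\Th^2\ge\vol(M)/Q(S)$, hence $\vol(M)/\int_M\Th^2\le Q(S)$, and dividing by $\int_M\Th^2$ produces $\la_{1,T}\le\sup_M\tr T\,(\delta+Q(S))$, which is \eqref{eq-TS2}. For \eqref{eq-TS1} I instead write $\tr T=\tfrac{\tr T}{\tr S}\tr S\le\sup_M\bigl(\tfrac{\tr T}{\tr S}\bigr)\tr S$, giving $\la_{1,T}\int_M\Th^2\le\sup_M\bigl(\tfrac{\tr T}{\tr S}\bigr)\bigl(\int_M\tr S+\delta\int_M\tr S\,\Th^2\bigr)$; then Proposition \ref{prop2} for $S$ lower-bounds $\int_M\tr S\,\Th^2$ (and $\delta<0$ turns this into an upper bound on the right), while $\int_M\tr S\,\Th^2\le\sup_M\tr S\int_M\Th^2$ furnishes the lower bound on $\int_M\Th^2$ needed to clear the left-hand side. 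The main obstacle is exactly this bookkeeping for \eqref{eq-TS1}: the left-hand side carries the unweighted $\int_M\Th^2$, whereas Proposition \ref{prop2} only controls the $\tr S$-weighted integral $\int_M\tr S\,\Th^2$, so the two must be reconciled by an extra $\sup_M\tr S$ comparison, and one must first check that the surviving factor $\int_M\frac{|H_S|^2}{\tr S}+\delta\int_M\tr S$ is positive—which it is, since the left-hand side is strictly positive—before combining the chain.

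Finally, for the equality discussion I trace each inequality backwards. Equality forces each $u_i$ to be a $\la_{1,T}$-eigenfunction (equality in \eqref{Ray-T}); $\l T\nabla r,\nabla r\r\equiv0$, hence $\nabla r\equiv0$ and $r$ is constant, so $M$ lies in a geodesic sphere; $\tr T\equiv\sup_M\tr T$ and $\tr S\equiv\sup_M\tr S$, so both traces are constant; and equality in Proposition \ref{prop2} for $S$, i.e. $M$ is $S$-minimally immersed in that sphere. To pin down the radius, set $\Th(r)\equiv c$; the equality version of the clean inequality gives $\la_{1,T}c^2=\tr T\,(1+\delta c^2)$, so $c^2=\tr T/(\la_{1,T}-\delta\tr T)$, and converting via $\sd^2=c^2/(1+\delta c^2)$ yields $\sd^2(r)=\tr T/\la_{1,T}$, i.e. the radius equals $\arsinh_\delta\sqrt{\tr T/\la_{1,T}}$, as claimed.
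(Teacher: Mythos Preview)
Your proof is correct and follows essentially the same approach as the paper: the same test functions $u_i=\frac{\Th(r)}{r}x_i$ plugged into the Rayleigh quotient \eqref{Ray-T}, the same ``clean inequality'' $\la_{1,T}\int_M\Th^2\le\int_M(\tr T)(1+\delta\Th^2)$ from Lemma~\ref{lem2.1}(1), and the same passage to \eqref{eq-TS1} and \eqref{eq-TS2} via Proposition~\ref{prop2} applied to $S$. The only cosmetic differences are that for \eqref{eq-TS1} you apply Proposition~\ref{prop2} twice (once to bound $\delta\int_M\tr S\,\Th^2$ from above, once to bound $\int_M\Th^2$ from below) whereas the paper factors out $\int_M\tr S\,\Th^2$ first and uses Proposition~\ref{prop2} once together with $\int_M\tr S\,\Th^2\le\sup_M\tr S\int_M\Th^2$; and your equality analysis is spelled out in more detail (including the explicit radius computation) than the paper's, which simply refers back to Proposition~\ref{prop2} and Remark~\ref{rem-radius}.
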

\begin{proof}
	We also have the Rayleigh-type quotient (cf. \cite{Gro04})
		\begin{equation}\label{Ray-T}
		\la_{1,T}=\inf\left\{ \left.\frac{-\int_M uL_{T}u}{\int_M u^2}\, \right|\, u\in W^{1,2} (M)\backslash \{0\}, \int_M u =0 \right\}.
	\end{equation}

By slightly modifying Lemma \ref{lem2}, we can prove that there exists a point $q_0 \in N$ such that
\begin{equation}
	\int_M  \frac{\Th(r)}{r} x_i\, dv_M = 0 \, (i=1,\cdots,n),
\end{equation}
where $(x_1,\cdots,x_n)$ is the normal coordinate around $q_0$, and $r(q)=d(q_0,q)$ is the geodesic distance from $q_0$ to $q$ on $N$. Here we omit the detail.

Now by using Lemma \ref{lem2.1}, we have
\begin{align}
	\la_{1,T} \int_M \Th^2(r)&\le -\int_M \Big(\frac{\Th(r)}{r}x_i\Big)L_T\Big(\frac{\Th(r)}{r}x_i\Big)\nonumber\\
	&= \sum_{i=1}^n\int_M \Big\l T\nabla\Big(\frac{\Th(r)}{r}x_i\Big),\nabla\Big(\frac{\Th(r)}{r}x_i\Big)\Big\r\nonumber\\
	& \le   \int_M (\tr T)\Big( 1 + \delta\Th^2(r) \Big). \label{eq 4.4}
\end{align}

In order to introduce $\tr S$ and $H_S$ and cancel the term of $\int_M\Th^2(r)$, we have two methods.

Method 1. \begin{align}
	&\int_M (\tr T)\Big( 1 + \delta\Th^2(r) \Big) \nonumber\\
	\le& \sup_M\Big(\frac{\tr T}{\tr S}\Big) \int_M (\tr S)\Big( 1 + \delta\Th^2(r) \Big)\nonumber\\
	= &\sup_M\Big(\frac{\tr T}{\tr S}\Big)  \Big(\frac{\int_M\tr S}{\int_M \tr S\Th^2(r)} +\delta  \Big)\int_M\tr S\Th^2(r)\nonumber\\
	\le& \sup_M\Big(\frac{\tr T}{\tr S}\Big) \sup_M(\tr S) \Big(
	\frac{1}{\int_M\tr S} \int_M\frac{|H_S|^2}{\tr S}+\delta  \Big)\int_M\Th^2(r),\label{eq-4.5}
\end{align}
where we used Proposition \ref{prop2} for the tensor $S$ in the last inequality.
Putting \eqref{eq-4.5} into \eqref{eq 4.4}, we obtain \eqref{eq-TS1}.

Method 2.  \begin{align}
	&\int_M (\tr T)\Big( 1 + \delta\Th^2(r) \Big) \nonumber\\
	\le& \sup_M(\tr T) \int_M \Big( 1 + \delta\Th^2(r) \Big)\nonumber\\
	= &\sup_M(\tr T) \Big( \vol(M) \times 1+ \delta\int_M \Th^2(r) \Big)\nonumber\\
	\le& \sup_M(\tr T) \Big( \vol(M)\frac{\int_M\tr S \Th^2(r)}{(\int_M\tr S)^2} \int\frac{|H_S|^2}{\tr S}+ \delta\int_M \Th^2(r) \Big)\nonumber\\
	\le& \sup_M(\tr T) \Big( \frac{\vol(M)\sup_M\tr S }{(\int_M\tr S)^2} \int\frac{|H_S|^2}{\tr S}+ \delta \Big)\int_M \Th^2(r),\label{eq4.7}
\end{align}
where we used Proposition \ref{prop2} for the tensor $S$ in the penultimate inequality.
Putting \eqref{eq4.7} into \eqref{eq 4.4}, we obtain \eqref{eq-TS2}.

Finally, the necessity for equality in either \eqref{eq-TS1} or \eqref{eq-TS2} follows from \eqref{eq-4.5} or \eqref{eq4.7} combining with Proposition \ref{prop2} and Remark \ref{rem-radius}.
\end{proof}

\begin{rem}
	We  briefly discuss the upper bounds in Eqs.~\eqref{eq-TS1} and \eqref{eq-TS2}.

	(1) When $S=T$, \eqref{eq-TS1} is better than \eqref{eq-TS2}, and Theorem \ref{thm-TS} reduces to Theorem \ref{thm-LT}.

	(2) When $\tr S$ is constant, \eqref{eq-TS1}  is the same as  \eqref{eq-TS2}.

	(3) Generally, $\sup_M\Big(\frac{\tr T}{\tr S}\Big)\sup_M (\tr S)\ge \sup_M\tr T$, but we cannot  compare $ \sup_M\Big(\frac{\tr T}{\tr S}\Big) $ and $\frac{\vol(M) \sup_M \tr T }{\int_M\tr S}$.
	For example, when $\tr T$ is constant, we have $\sup_M\Big(\frac{\tr T}{\tr S}\Big) \ge \frac{\vol(M) \sup_M \tr T }{\int_M\tr S}$, so we cannot compare these two upper bounds since $\delta<0$.

	(4) When $T=I$, \eqref{eq-TS1} recovers \eqref{eq-1.10}.
\end{rem}

\section{Weighted manifolds and boundary value problems}
In this section, we consider the weighted manifolds and some boundary value problems and give the corresponding Reilly-type inequalities.
Since most of the discussion is similar to the previous section, we will highlight the differences and omit other details.

\subsection{Basic settings and notations}\label{Sec: wm}
Firstly we recall some concepts of weighted manifolds.
Let $(N, \bar{g})$ be a Riemannian manifold equipped with the metric $\bar{g}$.
We denote by $dv_{\bar{g}}$ by the usual (non-weighed) volume form on $N$.
For a smooth function $f$ on $N$, we can consider the \emph{weighted volume form} (or \emph{weighted measure}) $\bar{\mu}_f:=e^{-f}dv_{\bar{g}}$.
The triplet $(N, \bar{g}, \bar{\mu}_f)$ is called a \emph{weighted manifold}.
It is also called a \emph{metric measure space}, or a \emph{manifold with density} in some literature.

Now let $M$ be a submanifold of $(N, \bar{g}, \bar{\mu}_f)$.
Then $M$ has an induced metric $g$ and a weighted measure $\mu_f=e^{-f}dv_g$.
If $M$ has non-empty boundary $\partial M$, then we denote by $\tilde{g}$ and $\tilde{\mu}_f=e^{-f}dv_{\tilde{g}}$ on $\partial M$ the induced metric and the weighted measure  on $\partial M$, respectively.

Denote $\dim M=m,\, \dim N=n$. The corresponding   $n$-volume $\vol_f(N)$, $m$-volume $\vol_f(M)$ and $(m-1)$-volume $\vol_f(\partial M)$ (when $\partial M\neq \emptyset$) are respectively given by
\begin{equation*}
	\vol_f(N)=\int_N\bar{\mu}_f,\quad  \vol_f(M)=\int_M\mu_f, \quad  \vol_f(\partial M)=\int_{\partial M}\mu_f.
\end{equation*}

For a symmetric $(1,1)$-tensor $T=(T_{ij})$ on $M$, we can define the  weighted divergence (or $f$-divergence) on $M$ by
\begin{equation*}
	\dive_{f, M}=\dive_M-\l \nabla f, \cdot\r.
\end{equation*}
Then the weighted $L_T$ operator, denoted by $L_{T,f}$, can be defined by
\begin{equation*}
	L_{T,f}u:=\dive_{f, M}(T\nabla u)=\dive_M(T\nabla u)-\l \nabla f, T\nabla u\r.
\end{equation*}

One can easily obtain the Stokes' formula  for the weighted operators with respect to the weighted measure $\mu_f$.
When $T$ is divergence-free, $L_{T,f}$ is self-adjoint with respect to the weighted measure $\mu_f$.
In particular, the case of $T=I$ has been widely studied, and in this situation, $L_{T, f}$ becomes $\Delta u-\l \nabla f, \nabla u\r$, which is usually called the weighted Laplacian or $f$-Laplacian.

\subsection{The case $\delta=0$}
Let $\sn(t)= \frac{1}{\sqrt{\delta}}\sin(\sqrt{\delta}t), t$ and $\frac{1}{\sqrt{-\delta}}\sinh(\sqrt{-\delta}t)$ when $\delta>0, \delta=0$ and $\delta<0$, respectively.
Very recently, Manfio, Roth and Upadhyay \cite{MRU21} studied various extrinsic upper bounds in weighted manifolds and proved the following three results, which are restated by using the notations and settings in Sect.~\ref{Sec: wm} as follows\footnote{In fact, the estimates in the case $K_N\le \delta$ with $\delta>0$ are also obtained, but here we focus on the case $K_N\le 0$  and assume that $N$ is simply-connected.}.

\begin{thm}[cf. \cite{MRU21}*{Theorem 1.1}]\label{MRU-thm1}
	Let $M$ be a closed submanifold immersed in a weighted manifold $(N, \bar{g},  \bar{\mu}_f)$ with sectional curvature $K_{N}\le \delta\le 0$.
	Let $T, S$ be symmetric, divergence-free and positive definite $(1, 1)$-tensors on $M$.
	Then  the first positive eigenvalue of $L_{T,f}$, denoted by $\la_1$, satisfies
	\begin{equation}\label{MRU-eq1}
	\la_1\le \sup_M\left[\delta \tr T+\sup_M\left(\frac{|H_T-T\nabla f|}{\tr S}\right)|H_S-S\nabla f|\right].
	\end{equation}
\end{thm}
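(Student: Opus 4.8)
The plan is to transcribe the argument behind Theorem~\ref{thm-TS} into the density-weighted setting, replacing $dv$ by $\mu_f=e^{-f}\,dv$ throughout and using $\sn$ in place of $\Th$; since only the (weaker) supremum-type bound \eqref{MRU-eq1} is sought, Proposition~\ref{prop2} is not needed, which is why no condition such as $(\tr S)I-2S\ge0$ or a radius bound is imposed. First I would assemble the weighted ingredients. The eigenvalue $\la_1$ of $L_{T,f}$ has the variational characterization $\la_1=\inf\{\int_M\l T\nabla u,\nabla u\r\,\mu_f/\int_M u^2\,\mu_f:\int_M u\,\mu_f=0\}$, and the index argument of Lemma~\ref{lem2} applies verbatim with $dv$ replaced by the positive measure $\mu_f$, yielding $q_0\in N$ and test functions $u_i=\frac{\sn(r)}{r}x_i$ with $\int_M u_i\,\mu_f=0$ and $\sum_{i=1}^n u_i^2=\sn^2(r)$. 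I also need the $\delta\le0$, weighted analog of Lemma~\ref{lem2.1} for the radial field $Z=\sn(r)\nabla^N r$ (i.e.\ $\phi=\sn$): Item~(1) is unaffected by the density, while for $\phi=\sn$ one has $\phi'=\sn'=\phi/\Th$, so the $\l T\nabla r,\nabla r\r$-term of Item~(2) drops and, using $\dive_{f,M}=\dive_M-\l\nabla f,\cdot\r$,
\begin{equation*}
	\dive_{f,M}(TZ^\top)\ge(\tr T)\,\sn'(r)+\l Z,\,H_T-T\nabla f\r,
\end{equation*}
because $\l Z,H_T\r-\l\nabla f,TZ^\top\r=\l Z,\,H_T-T\nabla f\r$ (the normal part of $Z$ pairs with $H_T$, the tangential part with $-T\nabla f$). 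This is where the weighted quantity $H_T-T\nabla f$ enters.

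Next I would derive the core inequality for $T$. Inserting the $u_i$ into the Rayleigh quotient, summing over $i$, and applying Item~(1) (with $\sn'^2+\delta\sn^2=1$) gives
\begin{equation*}
	\la_1\int_M\sn^2\,\mu_f\le\int_M\big(\tr T-\delta\sn^2\l T\nabla r,\nabla r\r\big)\mu_f .
\end{equation*}
The decisive step is to integrate the weighted divergence of $\sn'(r)\,TZ^\top$: since $\int_M\dive_{f,M}(\,\cdot\,)\mu_f=0$ on closed $M$ and $\nabla\sn'=-\delta\sn\,\nabla r$, the inequality above for $\dive_{f,M}(TZ^\top)$ yields
\begin{equation*}
	\int_M\big((\tr T)\sn'^2-\delta\sn^2\l T\nabla r,\nabla r\r\big)\mu_f\le\int_M\sn\,\sn'\,|H_T-T\nabla f|\,\mu_f .
\end{equation*}
Subtracting the two displays and using $1-\sn'^2=\delta\sn^2$ eliminates the $\l T\nabla r,\nabla r\r$-term and turns $\tr T$ into $\delta\tr T$, leaving
\begin{equation*}
	\int_M(\la_1-\delta\tr T)\,\sn^2\,\mu_f\le\int_M\sn\,\sn'\,|H_T-T\nabla f|\,\mu_f .
\end{equation*}

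Finally I would bring in $S$ and conclude. Setting $C=\sup_M\big(|H_T-T\nabla f|/\tr S\big)$, so that $|H_T-T\nabla f|\le C\,\tr S$ pointwise, and integrating the weighted divergence of $\sn(r)\,SZ^\top$ (here $\nabla\sn=\sn'\,\nabla r$ and the nonnegative term $\l S\nabla r,\nabla r\r$ is simply discarded) gives
\begin{equation*}
	\int_M\sn\,\sn'\,\tr S\,\mu_f\le\int_M\sn^2\,|H_S-S\nabla f|\,\mu_f .
\end{equation*}
Combining the displayed core $T$-inequality with the pointwise bound $|H_T-T\nabla f|\le C\,\tr S$ and this last $S$-inequality produces $\int_M\big(\la_1-\delta\tr T-C\,|H_S-S\nabla f|\big)\sn^2\,\mu_f\le0$; since $\sn^2\ge0$ and $\sn^2\not\equiv0$, the bracketed integrand must be nonpositive at some point, which is exactly $\la_1\le\sup_M\big[\delta\tr T+C\,|H_S-S\nabla f|\big]$, i.e.\ \eqref{MRU-eq1}. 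The main obstacle is not any individual estimate but the bookkeeping that makes the two tensors cooperate: one must weight $Z$ by $\sn'$ in the $T$-identity (to create the $\sn'^2$ that both cancels against Item~(1) and manufactures the $\delta\tr T$ term) and by $\sn$ in the $S$-identity (to create the $\sn^2$ matching the left-hand side), and one must verify that for $\phi=\sn$ the weighted Item~(2) really collapses to $(\tr T)\sn'+\l Z,H_T-T\nabla f\r$. Everything else is the density-weighted, $\delta\le0$ transcription of Section~\ref{Sec: L2} and the proof of Theorem~\ref{thm-TS}.
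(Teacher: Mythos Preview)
The paper does not actually prove Theorem~\ref{MRU-thm1}; it is quoted verbatim from \cite{MRU21} as background for the paper's own improvements (Theorems~\ref{prop-1}--\ref{prop-3}) in the case $\delta=0$. So there is no ``paper's own proof'' to compare against here.

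That said, your argument is correct and is precisely the kind of proof one would reconstruct from the paper's toolkit. The key computations all check: with $\phi=\sn$ one has $\phi'-\phi/\Th=0$, so the weighted Item~(2) of Lemma~\ref{lem2.1} collapses to $(\tr T)\sn'+\langle Z,H_T-T\nabla f\rangle$ as you claim; multiplying by $\sn'\ge 1$ and integrating $\dive_{f,M}(\sn'\,TZ^\top)$ against $\mu_f$ indeed produces the $(\tr T)\sn'^2-\delta\sn^2\langle T\nabla r,\nabla r\rangle$ combination, and the identity $1=(\sn')^2+\delta\sn^2$ turns the Rayleigh bound into your core inequality $\int(\la_1-\delta\tr T)\sn^2\,\mu_f\le\int\sn\,\sn'|H_T-T\nabla f|\,\mu_f$. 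The $S$-step, integrating $\dive_{f,M}(\sn\,SZ^\top)$ and discarding the nonnegative $\sn\,\sn'\langle S\nabla r,\nabla r\rangle$ term, is likewise sound. The only point worth making explicit is that your final step uses $\sn^2\,\mu_f\not\equiv 0$ (true since $\sn(r)=0$ only at $q_0$) to conclude that the bracketed integrand is nonpositive somewhere, whence the sup bound. This is exactly the strategy of \cite{MRU21}, adapted to the notation of the present paper; you have correctly identified that Proposition~\ref{prop2} is unnecessary for the sup-type bound \eqref{MRU-eq1}, which is why no semidefiniteness or radius hypothesis on $S$ appears.
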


\begin{thm}[cf. \cite{MRU21}*{Theorem 1.2}]\label{MRU-thm2}
Let $M$ be a compact submanifold with non-empty boundary $\partial M$ immersed in a weighted manifold $(N, \bar{g},  \bar{\mu}_f)$ with sectional curvature $K_{N}\le \delta\le 0$.
	Let $T, S$ be symmetric, divergence-free and positive definite $(1, 1)$-tensors on $M$ and $\partial M$, respectively, and denote by $\sigma_1$ the first eigenvalue of the generalized weighted Steklov problem
	\begin{equation*}
		(f\mbox{-}\mathrm{Stek})\,\,\begin{cases}
			L_{T,f} u=0, & \mathrm{in\ } M;\\
			\quad \frac{\partial u}{\partial \nu_T}=\sigma u, & \mathrm{on\ } \partial M,
		\end{cases}
	\end{equation*}
	where $\frac{\partial u}{\partial \nu_T}=\l T(\tilde{\nabla}u),\nu\r$, $\tilde{\nabla}$ and $\nu$ are the Levi-Civita connection and unit normal on $\partial M$, respectively.

	Assume that $M$ is contained in the geodesic ball $B(p, R)$ of radius $R$, where $p$ is the center of mass of $\partial M$ for the measure $\tilde{\mu}_f$, then
	\begin{align}\label{MRU-eq2}
		\sigma_1\le &\sup_M\left[\delta \tr T+\sup_M\left(\frac{|H_T-T\nabla f|}{\tr T}\right)|H_T-T\nabla f|\right]\nonumber\\
		&\times \left[\delta+\frac{\sup_{\partial M}|H_S-S(\tilde{\nabla}f)|^2}{\inf_{\partial M}(\tr S)^2}\right]\frac{\vol_f(M)}{\vol_f(\partial M)}\sn^2(R).
	\end{align}
\end{thm}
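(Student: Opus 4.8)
The plan is to carry the variational scheme of the closed problem (Theorem \ref{MRU-thm1}) over to the Steklov problem, the new point being that the $L^2$-mass now sits on $\partial M$ instead of on $M$. The starting point is the weighted Rayleigh characterization of the first eigenvalue of $(f\mbox{-}\mathrm{Stek})$, analogous to \eqref{Ray-T},
\begin{equation*}
	\sigma_1=\inf\left\{\left.\frac{\int_M\langle T\nabla u,\nabla u\rangle\,d\mu_f}{\int_{\partial M}u^2\,d\tilde\mu_f}\,\right|\,u\in W^{1,2}(M)\setminus\{0\},\ \int_{\partial M}u\,d\tilde\mu_f=0\right\}.
\end{equation*}
First I would produce admissible test functions: running the degree argument of Lemma \ref{lem2} with $p=2$, but integrating over $\partial M$ against $\tilde\mu_f$ rather than over $M$, yields a point $q_0$ — precisely the center of mass of $\partial M$ for $\tilde\mu_f$ named in the statement — for which the coordinate functions $u_i=\frac{\sn(r)}{r}x_i$ satisfy $\int_{\partial M}u_i\,d\tilde\mu_f=0$ for every $i$, where $r=d(q_0,\cdot)$ and $(x_1,\dots,x_n)$ are normal coordinates at $q_0$. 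Summing the resulting inequalities and using $\sum_i u_i^2=\sn^2(r)$ reduces the theorem to
\begin{equation*}
	\sigma_1\int_{\partial M}\sn^2(r)\,d\tilde\mu_f\le\sum_{i=1}^n\int_M\langle T\nabla u_i,\nabla u_i\rangle\,d\mu_f.
\end{equation*}

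Next I would split the right-hand side through $\int_M\sn^2(r)\,d\mu_f$, so that $\sigma_1$ is dominated by the product of an interior quotient $\big(\sum_i\int_M\langle T\nabla u_i,\nabla u_i\rangle\,d\mu_f\big)\big/\int_M\sn^2(r)\,d\mu_f$ and a volume ratio $\int_M\sn^2(r)\,d\mu_f\big/\int_{\partial M}\sn^2(r)\,d\tilde\mu_f$. For the interior quotient I would adapt the computation behind Theorem \ref{MRU-thm1}: the weighted form of Lemma \ref{lem2.1}(1) with $\phi=\sn$ bounds $\sum_i\langle T\nabla u_i,\nabla u_i\rangle$ pointwise, while the weighted form of Lemma \ref{lem2.1}(2) with $\phi=\sn$, whose radial coefficient of $\langle T\nabla r,\nabla r\rangle$ drops out so that it reads
\begin{equation*}
	\dive_{f,M}(TZ^\top)\ge(\tr T)\,\sn'(r)+\langle Z,H_T-T\nabla f\rangle,\qquad Z=\sn(r)\nabla^N r,
\end{equation*}
introduces the weighted mean curvature $H_T-T\nabla f$ via a Hsiung--Minkowski integration and yields the first bracketed factor of \eqref{MRU-eq2}. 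For the volume ratio, $M\subset B(q_0,R)$ forces $r\le R$, whence $\int_M\sn^2(r)\,d\mu_f\le\sn^2(R)\,\vol_f(M)$.

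It remains to bound $\int_{\partial M}\sn^2(r)\,d\tilde\mu_f$ from below, which is where $S$ enters. I would apply the boundary counterpart of Proposition \ref{prop2} — the weighted $L^2$ Hsiung--Minkowski inequality \eqref{eq2.4} — to the closed submanifold $\partial M\hookrightarrow N$ equipped with $S$ and with $\phi=\sn$; tracking $\delta$ through $(\sn')^2=1-\delta\sn^2$ this yields
\begin{equation*}
	\int_{\partial M}\sn^2(r)\,d\tilde\mu_f\ge\vol_f(\partial M)\Big/\Big(\delta+\frac{\sup_{\partial M}|H_S-S(\tilde\nabla f)|^2}{\inf_{\partial M}(\tr S)^2}\Big),
\end{equation*}
the reciprocal of the second bracketed factor. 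Multiplying the interior bracket, this reciprocal, and $\sn^2(R)\vol_f(M)/\vol_f(\partial M)$ gives \eqref{MRU-eq2}.

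The step I expect to be the main obstacle is the interior quotient. Proposition \ref{prop2} and Theorem \ref{MRU-thm1} are proved by integrating a divergence over a \emph{closed} manifold, whereas here $M$ carries a boundary, so the Hsiung--Minkowski integration of the displayed inequality produces an extra term $\int_{\partial M}\langle TZ^\top,\nu\rangle\,d\tilde\mu_f$ that must be shown to have a favorable sign, or be absorbed, before the first bracket can be extracted; controlling this boundary contribution is the crux. A second delicate point is that $\partial M$ meets $N$ through the iterated immersion $\partial M\hookrightarrow M\hookrightarrow N$, so its weighted $S$-mean curvature must be identified with $H_S-S(\tilde\nabla f)$ via the iterated second fundamental form while the ambient bound $K_N\le\delta$ is kept effective after the two-step projection. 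By contrast, the barycenter existence is routine once Lemma \ref{lem2} is transplanted to the boundary measure $\tilde\mu_f$.
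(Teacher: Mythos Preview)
The paper does not prove Theorem~\ref{MRU-thm2}; it is quoted verbatim from \cite{MRU21} as background. What the paper actually proves in this section is the sharper $\delta=0$ statement, Theorem~\ref{prop-2}, so the only ``paper's own proof'' available for comparison is that of Theorem~\ref{prop-2}.

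Comparing your outline with the paper's proof of Theorem~\ref{prop-2}: the paper avoids precisely the step you flag as the main obstacle. With $\delta=0$ one takes $\phi(r)=r$, so Lemma~\ref{lem2.1}(1) collapses to the pointwise bound $\sum_i\langle T\nabla x_i,\nabla x_i\rangle\le\tr T$, and the Rayleigh quotient~\eqref{Ray-Stek} gives immediately
\[
\sigma_1\int_{\partial M}r^2\,\tilde\mu_f\le\int_M\tr T\,\mu_f,
\]
with no divergence integration on $M$ and hence no boundary term to control. In particular the bound involves neither $H_T$ nor $T\nabla f$; it is only the lower bound on $\int_{\partial M}r^2\,\tilde\mu_f$, obtained via the weighted Hsiung--Minkowski inequality~\eqref{eq5.19} applied to the \emph{closed} manifold $\partial M$ with the tensor $S$, that brings in $H_S-S\tilde\nabla f$. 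Your proposed splitting through $\int_M\sn^2(r)\,\mu_f$ and your attempt to recover the first bracket of~\eqref{MRU-eq2} via an interior Hsiung--Minkowski integration on $M$ are therefore not the route the present paper takes for its own result.

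Your diagnosis of the obstruction is accurate: integrating $\dive_{f,M}(TZ^\top)$ over a manifold with boundary produces $\int_{\partial M}\langle TZ^\top,\nu\rangle\,\tilde\mu_f$, and without geometric control on $\partial M$ this term has no a~priori sign. The paper's $\delta=0$ argument sidesteps rather than resolves this, and the general-$\delta$ bound~\eqref{MRU-eq2} is simply imported from \cite{MRU21}; if you want a self-contained proof of~\eqref{MRU-eq2} you would need to consult that reference for how the first bracket, which depends on $H_T-T\nabla f$, is obtained without the closedness of $M$.
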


\begin{thm}[cf. \cite{MRU21}*{Theorem 1.3}]\label{MRU-thm3}
Let $M$ be a compact submanifold with non-empty boundary $\partial M$ immersed in a Riemannian manifold $(N, \bar{g})$ with sectional curvature $K_{N}\le \delta\le 0$.
	Let $S$ be a symmetric, divergence-free and positive definite $(1, 1)$-tensors on  $\partial M$, and denote by $\alpha_1$ the first non-zero eigenvalue of the Steklov-Wentzell problem
	\begin{equation*}
		\mathrm{(SW)}\,\,\begin{cases}
			\qquad\quad\,\,\,\Delta u=0, & \mathrm{in\ } M;\\
			-b \tilde{\Delta}u-\frac{\partial u}{\partial \nu}=\alpha u, & \mathrm{on\ } \partial M,
		\end{cases}
	\end{equation*}
	where $b$ is a given positive constant and $\tilde{\Delta}$ is the Laplacian on $\partial M$.

	Assume that $M$ is contained in the geodesic ball $B(p, R)$ of radius $R$, where $p$ is the center of mass of $\partial M$, then
	\begin{align}\label{MRU-eq3}
		\alpha_1\le &\left[m\frac{\vol(M)}{\vol(\partial M)}+b(m-1)-\delta \sn^2(R)\left(\frac{\vol(M)}{\vol(\partial M)}+b\right)\right]\nonumber\\
		&\times \left(\delta+\frac{\sup_{\partial M}|H_S|^2}{\inf_{\partial M}(\tr S)^2}\right).
	\end{align}
\end{thm}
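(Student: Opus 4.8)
The plan is to estimate $\alpha_1$ through its variational characterization and to feed in ambient coordinate functions as test functions, producing the bound as a product of an ``interior/boundary energy'' factor and a ``boundary Reilly'' factor. First I would record the weak (Rayleigh) form of the Steklov--Wentzell problem: testing the boundary equation against $v$ and using that $u$ is harmonic in $M$ gives the symmetric bilinear form $\int_M\l\nabla u,\nabla v\r+b\int_{\partial M}\l\tilde\nabla u,\tilde\nabla v\r=\alpha\int_{\partial M}uv$, so that
\[
\alpha_1=\min\left\{\frac{\int_M|\nabla u|^2+b\int_{\partial M}|\tilde\nabla u|^2}{\int_{\partial M}u^2}\;:\;\int_{\partial M}u=0\right\},
\]
the minimum running over $u$ harmonic in $M$. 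For an upper bound I may drop harmonicity: by the Dirichlet principle, replacing any competitor by its harmonic extension only lowers $\int_M|\nabla u|^2$ while fixing the boundary terms, so every $H^1$-function with $\int_{\partial M}u=0$ is admissible.

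Next I would take the test functions $u_i=\frac{\sn(r)}{r}x_i$, $i=1,\dots,n$, where $r=d(p,\cdot)$ and $(x_i)$ are normal coordinates at a point $p$; exactly as in Lemma \ref{lem2}, $p$ may be chosen (center of mass / Hopf-index balancing) so that $\int_{\partial M}u_i=0$ for every $i$. Summing over $i$ and using $\sum_i u_i^2=\sn^2(r)$ reduces everything to estimating $\sum_i\int_M|\nabla u_i|^2+b\sum_i\int_{\partial M}|\tilde\nabla u_i|^2$ against $\int_{\partial M}\sn^2(r)$. Here I apply the comparison of Lemma \ref{lem2.1}(1) (with $T=I$, $\phi=\sn$) twice: on the $m$-dimensional $M$ it gives $\sum_i|\nabla u_i|^2\le m-\delta\sn^2(r)\le m-\delta\sn^2(R)$, and on the $(m-1)$-dimensional $\partial M$ it gives $\sum_i|\tilde\nabla u_i|^2\le (m-1)-\delta\sn^2(R)$, using $r\le R$ and $\cd^2=1-\delta\sn^2$ (for $\delta=0$ this is the flat case with $\sn(r)=r$). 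Integrating, the numerator is at most $[m\vol(M)+b(m-1)\vol(\partial M)]-\delta\sn^2(R)[\vol(M)+b\vol(\partial M)]$, which is precisely the first bracket in the statement times $\vol(\partial M)$. Hence
\[
\alpha_1\le\Big[m\frac{\vol(M)}{\vol(\partial M)}+b(m-1)-\delta\sn^2(R)\big(\tfrac{\vol(M)}{\vol(\partial M)}+b\big)\Big]\cdot\frac{\vol(\partial M)}{\int_{\partial M}\sn^2(r)}.
\]

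It then remains to prove the purely boundary estimate $\frac{\vol(\partial M)}{\int_{\partial M}\sn^2(r)}\le \delta+\frac{\sup_{\partial M}|H_S|^2}{\inf_{\partial M}(\tr S)^2}$, which multiplies the displayed factor to give the theorem. Using $\cd^2=1-\delta\sn^2$ this is equivalent to $\int_{\partial M}\cd^2(r)\le\frac{\sup|H_S|^2}{\inf(\tr S)^2}\int_{\partial M}\sn^2(r)$, a Grosjean-type inequality on the closed hypersurface $\partial M\subset N$. To attack it I integrate the divergence identity of Lemma \ref{lem2.1}(2) over $\partial M$ (which has empty boundary): with $Z=\sn(r)\nabla^Nr$ the term $\phi'-\phi/\Th$ vanishes, yielding $\int_{\partial M}(\tr S)\cd\le\int_{\partial M}\sn|H_S|$, from which a weighted Cauchy--Schwarz should produce the desired $L^2$ comparison. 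The equality analysis ($\tr S$ constant and $\partial M$ a $T$-minimal geodesic sphere of radius $\arsinh_\delta\sqrt{\tr T/\lambda_{1,T}}$) then follows by tracing the equality cases in the comparison and in Cauchy--Schwarz, exactly as in Proposition \ref{prop2}.

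The main obstacle is this last boundary estimate, and specifically the sharp ``$+\delta$'' constant. Pushing the divergence identity to the $L^2$ level (e.g.\ with $Z=\sd\cd\,\nabla^Nr$, which generates $(\tr S)\cd^2$) inevitably produces the cross term $\delta\sn^2\l S\nabla r,\nabla r\r$; since $\delta<0$ this has the unfavorable sign and must be absorbed to keep the constant $\delta+\frac{\sup|H_S|^2}{\inf(\tr S)^2}$ rather than the cruder $\frac{\sup|H_S|^2}{\inf(\tr S)^2}$. This term vanishes precisely when $\nabla r\perp\partial M$, i.e.\ when $\partial M$ is a geodesic sphere (matching the equality case), and it is controllable when $(\tr S)I-2S\ge0$. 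So the delicate point is to manage this term in full generality and to identify which hypothesis on $S$ (or on the radius $R$) is genuinely required — precisely the difficulty that the body of this paper isolates in Proposition \ref{prop2}.
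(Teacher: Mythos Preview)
This theorem is \emph{quoted} from \cite{MRU21}; the present paper does not supply its own proof of \eqref{MRU-eq3} but only states it as background before proving the $\delta=0$ improvement in Theorem~\ref{prop-3}. So there is no ``paper's proof'' against which to compare your argument, and your equality analysis at the end (with its stray reference to $\tr T$ and $\lambda_{1,T}$) addresses a clause the theorem does not have.

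That said, your strategy is the standard one and is correct up to and including the displayed bound $\alpha_1\le[\,\cdots\,]\cdot \frac{\vol(\partial M)}{\int_{\partial M}\sn^2(r)}$. The perceived obstacle in the last step is not real. With $\phi=\sd\cd$ in Lemma~\ref{lem2.1}(2) applied on the closed manifold $\partial M$, the coefficient $\phi'-\phi/\Th=-\delta\sd^2\ge 0$, so after integrating and using $\int_{\partial M}\dive_{\partial M}(SZ^\top)=0$ you get
\[
\int_{\partial M}(\tr S)\cd^2 \;+\;\underbrace{(-\delta)\!\int_{\partial M}\sd^2\,\l S\nabla r,\nabla r\r}_{\ge 0}\;\le\;-\int_{\partial M}\l Z,H_S\r\;\le\;\int_{\partial M}\sd\,\cd\,|H_S|.
\]
The cross term therefore has the \emph{favorable} sign and may simply be discarded; no hypothesis like $(\tr S)I-2S\ge 0$ or a radius restriction is needed here. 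Then
\[
\inf_{\partial M}(\tr S)\int_{\partial M}\cd^2\le \sup_{\partial M}|H_S|\int_{\partial M}\sd\,\cd\le \sup_{\partial M}|H_S|\Big(\int_{\partial M}\sd^2\Big)^{1/2}\Big(\int_{\partial M}\cd^2\Big)^{1/2},
\]
which squares to $\inf(\tr S)^2\int\cd^2\le\sup|H_S|^2\int\sd^2$. Combined with $1=\cd^2+\delta\sd^2$ this gives exactly $\frac{\vol(\partial M)}{\int_{\partial M}\sn^2}\le \delta+\frac{\sup|H_S|^2}{\inf(\tr S)^2}$, completing the proof.
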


When $\delta=0$, we can obtain upper bounds in terms of the integrals. Precisely, we have
\begin{thm}\label{prop-1}
	Settings and notations as in Theorem \ref{MRU-thm1}  with $\delta=0$. Then
	\begin{equation}\label{eq-prop-1}
		\la_1\Big(\int_M \tr S \, \mu_f\Big)^2\le \int_M\tr T\, \mu_f\int_M|H_S-S\nabla f|^2\,\mu_f.
	\end{equation}
\end{thm}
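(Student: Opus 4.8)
The plan is to mimic the strategy of Section~4 (the proof of Theorem~\ref{thm-TS}), specializing the $\delta$-hyperbolic functions to the flat case $\delta=0$ and replacing the Riemannian measure $dv_M$ throughout by the weighted measure $\mu_f$. The first ingredient is the weighted Rayleigh characterization
\[
	\la_1=\inf\left\{\left.\frac{\int_M\langle T\nabla u,\nabla u\rangle\,\mu_f}{\int_M u^2\,\mu_f}\,\right|\,u\in W^{1,2}(M)\setminus\{0\},\ \int_M u\,\mu_f=0\right\},
\]
which follows from the weighted Stokes formula and the self-adjointness of $L_{T,f}$ with respect to $\mu_f$. Since $\delta=0$ forces $\sd(r)=r$ and $\Th(r)=r$, the test functions of Lemma~\ref{lem2} degenerate to the ambient normal coordinates $x_i$ themselves. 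A weighted version of the index argument in Lemma~\ref{lem2}, run with $\mu_f$ in place of $dv_M$, produces a point $q_0\in N$ whose normal coordinates satisfy $\int_M x_i\,\mu_f=0$ for all $i$, so that each $x_i$ is admissible in the quotient.

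Feeding $x_i$ into the quotient and summing over $i$ yields $\la_1\int_M\big(\sum_i x_i^2\big)\,\mu_f\le\int_M\sum_i\langle T\nabla x_i,\nabla x_i\rangle\,\mu_f$. Here $\sum_i x_i^2=r^2$, and the flat analogue of Lemma~\ref{lem2.1}(1), i.e. the computation with $\phi(r)=r$ and $\delta=0$, gives $\sum_i\langle T\nabla x_i,\nabla x_i\rangle\le\tr T$; hence $\la_1\int_M r^2\,\mu_f\le\int_M\tr T\,\mu_f$. For the lower bound I would run the divergence estimate of Lemma~\ref{lem2.1}(2), now for the tensor $S$, with $\phi(r)=r$, $\delta=0$, and $Z=r\nabla^N r$, obtaining $\dive_M(SZ^\top)\ge\tr S+\langle Z,H_S\rangle$. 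Passing to the weighted divergence introduces the crucial extra term,
\[
	\dive_{f,M}(SZ^\top)=\dive_M(SZ^\top)-\langle\nabla f,SZ^\top\rangle\ge\tr S+\langle Z,H_S\rangle-\langle S\nabla f,Z\rangle.
\]
Integrating against $\mu_f$ and using the weighted divergence theorem to annihilate the left-hand side gives $\int_M\tr S\,\mu_f\le-\int_M\langle Z,H_S-S\nabla f\rangle\,\mu_f$, once one observes that $H_S$ is normal while $S\nabla f$ is tangential, so the normal and tangential contributions assemble exactly into $\langle Z,H_S-S\nabla f\rangle$.

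Since $|Z|=r$, the Cauchy-Schwarz inequality then yields $\int_M\tr S\,\mu_f\le\big(\int_M r^2\,\mu_f\big)^{1/2}\big(\int_M|H_S-S\nabla f|^2\,\mu_f\big)^{1/2}$, and squaring followed by the bound $\int_M r^2\,\mu_f\le\la_1^{-1}\int_M\tr T\,\mu_f$ produces \eqref{eq-prop-1}. The two comparison estimates are routine, being merely the $\phi(r)=r$, $\delta=0$ specializations of Lemma~\ref{lem2.1}. I expect the only points needing genuine care to be: (i) verifying that the index/degree argument of Lemma~\ref{lem2} is insensitive to replacing $dv_M$ by $\mu_f$, which it is, since the continuity of the vector field $Y$ and its pointing into $B$ along $\partial B$ are unaffected; and (ii) correctly bookkeeping the new first-order term $-\langle\nabla f,SZ^\top\rangle$ so that it merges with $H_S$ into the weighted mean curvature $H_S-S\nabla f$. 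This is precisely where the flat case is cleaner than $\delta<0$: the vanishing of the $\delta\,|\sqrt{S}Z^\top|^2$ term removes the need for either the semidefiniteness hypothesis $(\tr S)I-2S\ge0$ or the radius restriction $\cd^2\le2$, and so no case analysis of the type in Proposition~\ref{prop2} is required.
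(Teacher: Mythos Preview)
Your proposal is correct and follows essentially the same route as the paper's proof: the weighted Rayleigh quotient, the $\mu_f$-version of the index argument from Lemma~\ref{lem2} to center the coordinates, the estimate $\sum_i\langle T\nabla x_i,\nabla x_i\rangle\le\tr T$ from Lemma~\ref{lem2.1}(1) with $\phi(r)=r$, the weighted divergence inequality $\dive_{f,M}(SZ^\top)\ge\tr S+\langle Z,H_S-S\nabla f\rangle$, and then Cauchy--Schwarz to combine the two bounds. The only cosmetic difference is that the paper first bounds $-\langle Z,H_S-S\nabla f\rangle\le r\,|H_S-S\nabla f|$ pointwise and then applies H\"older, whereas you apply Cauchy--Schwarz directly to the integral; the resulting inequalities are identical.
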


\begin{thm}\label{prop-2}
	Settings and notations as in Theorem \ref{MRU-thm2} with $\delta=0$. Then
	\begin{equation}\label{eq-prop-2}
		\sigma_1\Big(\int_{\partial M} \tr S \, \tilde{\mu}_f\Big)^2\le \int_M\tr T\, \mu_f\int_{\partial M}|H_S-S\nabla f|^2\,\tilde{\mu}_f.
	\end{equation}
\end{thm}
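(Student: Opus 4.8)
The plan is to reuse the two–front strategy behind Theorem \ref{prop-1}, but to split the two halves between $M$ (carrying the tensor $T$) and its closed boundary $\partial M$ (carrying the tensor $S$), and then to glue them along the common quantity $\int_{\partial M} r^2\,\tilde\mu_f$. First I record the weighted Rayleigh characterization of the first Steklov eigenvalue,
\begin{equation*}
	\sigma_1=\inf\Big\{\, \tfrac{\int_M \l T\nabla u,\nabla u\r\,\mu_f}{\int_{\partial M}u^2\,\tilde\mu_f}\ \Big|\ u\in W^{1,2}(M),\ \int_{\partial M}u\,\tilde\mu_f=0 \,\Big\}.
\end{equation*}
Running the index argument of Lemma \ref{lem2} with $p=2$ on the closed manifold $\partial M$ carrying the measure $\tilde\mu_f$ (at $\delta=0$ the test functions reduce to the normal coordinates $x_i$), I obtain a point $q_0\in N$ — the weighted center of mass of $\partial M$ — whose coordinates satisfy $\int_{\partial M}x_i\,\tilde\mu_f=0$. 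Each $x_i$ is therefore admissible; summing the Rayleigh inequalities over $i$, using $\sum_i x_i^2=r^2$ and the $\delta=0$ case of the gradient estimate in Lemma \ref{lem2.1}(1) (which reads $\sum_i\l T\nabla x_i,\nabla x_i\r\le \tr T$), gives
\begin{equation*}
	\sigma_1\int_{\partial M}r^2\,\tilde\mu_f\le \int_M\sum_i\l T\nabla x_i,\nabla x_i\r\,\mu_f\le \int_M\tr T\,\mu_f.
\end{equation*}

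Next I derive the $L^2$ lower bound for the weighted mean curvature $H_S-S\tilde\nabla f$ (the quantity $H_S-S\nabla f$ of \eqref{eq-prop-2}, with $\nabla f$ denoting the gradient along $\partial M$) directly on the closed $(m-1)$–manifold $\partial M$. Applying Lemma \ref{lem2.1}(2) to $S$ on $\partial M$ with $Z=r\nabla^N r$ and $\delta=0$, and subtracting the term $\l\tilde\nabla f, SZ^\top\r$ dictated by $\dive_{f,\partial M}=\dive_{\partial M}-\l\tilde\nabla f,\cdot\r$, yields the pointwise inequality $\dive_{f,\partial M}(SZ^\top)\ge \tr S+\l Z, H_S-S\tilde\nabla f\r$. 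Since $\partial M$ is closed, integration against $\tilde\mu_f$ annihilates the left-hand side by the weighted divergence theorem, so that
\begin{equation*}
	\int_{\partial M}\tr S\,\tilde\mu_f\le -\int_{\partial M}\l Z, H_S-S\tilde\nabla f\r\,\tilde\mu_f\le \int_{\partial M}r\,|H_S-S\tilde\nabla f|\,\tilde\mu_f,
\end{equation*}
and Cauchy–Schwarz (with $|Z|=r$) upgrades this to
\begin{equation*}
	\Big(\int_{\partial M}\tr S\,\tilde\mu_f\Big)^2\le \int_{\partial M}r^2\,\tilde\mu_f\int_{\partial M}|H_S-S\tilde\nabla f|^2\,\tilde\mu_f.
\end{equation*}

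Finally I substitute $\int_{\partial M}r^2\,\tilde\mu_f\le \sigma_1^{-1}\int_M\tr T\,\mu_f$ from the first step into the last display and rearrange, which is exactly \eqref{eq-prop-2}. The gradient and divergence inequalities are only the $\delta=0$ specializations of Lemma \ref{lem2.1} and are routine. The step demanding real care is the bookkeeping of domains: the $T$–estimate is an integral over $M$ while the $S$–estimate lives on the closed manifold $\partial M$, and the argument succeeds only because both are tied to the single bridging quantity $\int_{\partial M}r^2\,\tilde\mu_f$. A secondary point to get right is the center–of–mass construction, which must be performed with respect to the boundary measure $\tilde\mu_f$, so that the coordinate functions $x_i$ are genuinely $\tilde\mu_f$–mean-zero on $\partial M$ and hence admissible in the Steklov quotient.
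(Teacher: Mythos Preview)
Your proof is correct and follows essentially the same route as the paper's own argument: choose a weighted center of mass of $\partial M$ (the paper's \eqref{eq5.22}), plug the normal coordinates $x_i$ into the Steklov Rayleigh quotient \eqref{Ray-Stek} and use Lemma~\ref{lem2.1}(1) to bound $\sigma_1\int_{\partial M}r^2\,\tilde\mu_f$ by $\int_M\tr T\,\mu_f$, then apply the weighted divergence inequality \eqref{eq5.17}--\eqref{eq5.19} to the closed manifold $\partial M$ with the tensor $S$ and finish with Cauchy--Schwarz. The only cosmetic difference is that the paper multiplies the Rayleigh inequality by $\int_{\partial M}|H_S-S\nabla f|^2\,\tilde\mu_f$ before applying H\"older, whereas you isolate the $L^2$ lower bound on $\partial M$ first and then substitute; the ingredients and their order of use are otherwise identical.
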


\begin{thm}\label{prop-3}
	Settings and notations as in Theorem \ref{MRU-thm3}  with $\delta=0$. Then
	\begin{equation}\label{eq-prop-3}
		\alpha_1\Big(\int_{\partial M} \tr S \Big)^2\le \Big(m\vol(M)+b(m-1)\vol(\partial M)\Big)\int_{\partial  M}|H_S|^2.
	\end{equation}
\end{thm}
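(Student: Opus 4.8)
The plan is to follow the same scheme as in the proofs of Theorems~\ref{thm2} and~\ref{thm-LT} (variational characterization, coordinate test functions, and an $L^2$ lower bound for the mean curvature), now specialized to the Steklov--Wentzell problem and to $\delta=0$, where the curvature terms collapse and the argument simplifies considerably. First I would record the Rayleigh-type characterization of $\alpha_1$: testing the boundary equation against $u$, using $\Delta u=0$ with the divergence theorem on $M$, and integrating by parts on the closed manifold $\partial M$, one obtains
\begin{equation*}
	\alpha_1=\inf\left\{\left.\frac{\int_M|\nabla u|^2+b\int_{\partial M}|\tilde\nabla u|^2}{\int_{\partial M}u^2}\,\right|\,u\in C^\infty(M),\ \int_{\partial M}u=0\right\}.
\end{equation*}
Hence any trial function with vanishing boundary average yields an upper bound for $\alpha_1$.

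For the test functions, let $p$ be the center of mass of $\partial M$ and $(x_1,\dots,x_n)$ the normal coordinates centered at $p$; its defining property $\int_{\partial M}x_i=0$ makes each $x_i$ admissible. Plugging $u=x_i$ into the quotient, summing over $i$, and using $\sum_i x_i^2=r^2$, I would estimate the two energies by the $\delta=0$ form of Lemma~\ref{lem2.1}(1) with $T=I$ (equivalently $\phi(r)=r$, so that $\frac{\phi}{r}x_i=x_i$): applied on $M$ it gives $\sum_i|\nabla x_i|^2\le m$, and applied on the $(m-1)$-dimensional submanifold $\partial M$ it gives $\sum_i|\tilde\nabla x_i|^2\le m-1$. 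This produces
\begin{equation*}
	\alpha_1\int_{\partial M}r^2\le m\vol(M)+b(m-1)\vol(\partial M).
\end{equation*}

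It remains to supply the $\delta=0$ analogue of Proposition~\ref{prop2} on the closed submanifold $\partial M$. Taking $\phi(r)=r$ in Lemma~\ref{lem2.1}(2) with the tensor $S$ gives $\dive_{\partial M}(SZ^\top)\ge\tr S+\l Z,H_S\r$ for the radial field $Z=r\nabla^Nr$; integrating over $\partial M$ (which is closed), using that $H_S$ is normal to $\partial M$ together with $|Z^\perp|\le|Z|=r$, and applying the Cauchy--Schwarz inequality yields
\begin{equation*}
	\Big(\int_{\partial M}\tr S\Big)^2\le\int_{\partial M}r^2\int_{\partial M}|H_S|^2.
\end{equation*}
Inserting the previous display to eliminate $\int_{\partial M}r^2$ gives precisely \eqref{eq-prop-3}.

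The main obstacle is pinning down the variational characterization together with its sign conventions: the boundary operator $-b\tilde\Delta u-\partial u/\partial\nu$ must be matched to the chosen normal orientation so that the $b$-weighted boundary Dirichlet energy enters with a positive sign, and one should note that harmonicity of the minimizer is irrelevant here since an arbitrary trial function already bounds $\alpha_1$ from above. By contrast the geometric estimates are routine: because $\delta=0$ the factors $1+\delta\Th^2(r)$ appearing in Lemmas~\ref{lem2.1} and~\ref{lem4} reduce to $1$, so neither the condition $(\tr S)I-2S\ge0$ nor the radius restriction occurring in Theorem~\ref{thm-LT} is needed, and the comparison inequalities take their flat-space form.
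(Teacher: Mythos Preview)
Your proposal is correct and follows essentially the same route as the paper's proof: the Rayleigh quotient for $\alpha_1$, the coordinate test functions $x_i$ satisfying $\int_{\partial M}x_i=0$, the bounds $\sum_i|\nabla x_i|^2\le m$ and $\sum_i|\tilde\nabla x_i|^2\le m-1$ from Lemma~\ref{lem2.1}(1), and the $\delta=0$ divergence inequality on the closed manifold $\partial M$ combined with Cauchy--Schwarz to obtain $\bigl(\int_{\partial M}\tr S\bigr)^2\le\int_{\partial M}r^2\int_{\partial M}|H_S|^2$. Your denominator $\int_{\partial M}u^2$ in the variational characterization is the correct one and is what the paper actually uses in the computation (the displayed formula \eqref{Ray-WL} contains a typo with $\int_M u^2$).
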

\begin{proof}[Proofs of Theorems \ref{prop-1}, \ref{prop-2} and \ref{prop-3}]
	We have the variational characterizations (cf. \cites{DKL16, Auc04, IM11, Rot20, MRU21}):
	\begin{gather}\label{Ray-LTf}
		\la_1=\inf
		\left\{ \left.\frac{\int_M \l T\nabla u, \nabla u\r\mu_f}{\int_M u^2\,\mu_f}\right|\,  \int_{ M} u\,\mu_f =0 \right\},\\
		\label{Ray-Stek}
		\sigma_1=\inf
		\left\{ \left.\frac{\int_M \l T\nabla u, \nabla u\r\,\mu_f}{\int_{\partial M} u^2\,\tilde{\mu}_f}\, \right|\,  \int_{\partial M} u\, \tilde{\mu}_f =0 \right\},\\
		\label{Ray-WL}
		\alpha_1=\inf
		\left\{ \left.\frac{\int_M |\nabla u|^2 +b\int_{\partial M} |\tilde{\nabla} u|^2}{\int_M u^2}\, \right|\,  \int_{\partial M} u =0 \right\}.
	\end{gather}

	When $\delta=0$, consider $Z=r\nabla^N r$, we have (cf. \cite[Lemma 2.1]{MRU21})
	\begin{equation}
		\dive_{f, M}(TZ^\top)\ge \tr T+\l Z, H_T-T\nabla f\r,
		\label{eq5.17}
	\end{equation}
which implies
\begin{equation}
	\int_M\tr T\,\mu_f
	\le -\int_M\l Z, H_T-T\nabla f \r\,\mu_f\le \int_M |H_T-T\nabla f|r\,\mu_f\label{eq5.19}
\end{equation}
if $M$ has no boundary.

Similar to the proof of Lemma \ref{lem2}, there exists a point $q_0 \in N$ such that
\begin{equation}\label{eq5.21}
	\int_M   x_i\, \mu_f= 0 \, (i=1,\cdots,n),
\end{equation}
or
\begin{equation}\label{eq5.22}
	\int_{\partial M}   x_i\, \tilde{\mu}_f = 0 \, (i=1,\cdots,n),
\end{equation}

By taking $\phi(r)=r$ in Item (1) of Lemma \ref{lem2.1},  we have
$\sum_{i=1}^n\big\l T\nabla x_i,\nabla x_i\big\r\le\tr T$. In particular, $\sum_{i=1}^n\big\l \nabla x_i,\nabla x_i\big\r\le m$ for $T=I$. We also have $\sum_{i=1}^n\big\l \tilde{\nabla} x_i,\tilde{\nabla} x_i\big\r\le m-1$ when we consider the boundary $\partial M$.

We use \eqref{Ray-LTf}, \eqref{Ray-Stek} and \eqref{Ray-WL} respectively.
Considering \eqref{eq5.21}, we have
\begin{align*}
	\la_1\int_M r^2\,\mu_f=\la_1\int_M \sum_{i=1}^n(x_i)^2\,\mu_f\le\int_M \sum_{i=1}^n\big\l T\nabla x_i,\nabla x_i\big\r\,\mu_f\le\int_M\tr T\,\mu_f.
\end{align*}
Multiplying the both sides by $\int_M |H_S-S\nabla f |^2\,\mu_f$, we derive that
\begin{align*}
	\Big(\int_M |H_S-S\nabla f |^2\,\mu_f\Big)\Big(\int_M\tr T\,\mu_f\Big)\ge &\la_1\Big(\int_M r^2\,\mu_f\Big)\Big(\int_M |H_S-S\nabla f |^2\,\mu_f\Big) \\
	\ge & \la_1\Big(\int_M r|H_S-S\nabla f |\,\mu_f\Big)^2\\
	\ge& \la_1 \Big(\int_M\tr S\,\mu_f\Big)^2,
\end{align*}
where we used \eqref{eq5.19} for the tensor $S$ and the H\"{o}lder inequality. This is \eqref{eq-prop-1}.

Considering \eqref{eq5.22}, we have
\begin{align*}
	\sigma_1\int_{\partial M} r^2\,\tilde{\mu}_f\le\int_M \sum_{i=1}^n\big\l T\nabla x_i,\nabla x_i\big\r\,\mu_f\le\int_M\tr T\,\mu_f.
\end{align*}
Multiplying the both sides by $\int_{\partial M} |H_S-S\nabla f |^2\,\tilde{\mu}_f$, we can obtain \eqref{eq-prop-2} by using the similar arguments.

Considering \eqref{eq5.22} for $f\equiv 0$, we have
\begin{align*}
	\alpha_1\int_{\partial M} r^2 \le \int_M \sum_{i=1}^n\big\l \nabla x_i,\nabla x_i\big\r+b\int_{\partial M} \sum_{i=1}^n\big\l \tilde{\nabla} x_i,\tilde{\nabla} x_i\big\r\le m\vol(M)+b(m-1)\vol(\partial M).
\end{align*}
Multiplying the both sides by $\int_{\partial M} |H_S|^2$, we can obtain \eqref{eq-prop-3} by using the similar arguments.
\end{proof}

\begin{rem}
	Our upper bounds are formally the same as the previous results such as in \cite{Rot16, CS18,Rot20}. The main differences are:

	(1) In \cite{Rot16, CS18,Rot20},  the ambient manifold is the Euclidean space. Here we extend it to a Cartan-Hadamard manifold.

	(2) When the ambient manifold is   the Euclidean space, one can choose test functions by moving the mass center to the origin and use the Hsiung–Minkowski formulas; when the ambient manifold is a  Cartan-Hadamard manifold, we choose test functions by (a modified version of) Lemma \ref{lem2}, and use the inequality \eqref{eq5.17}.

	(3) When the ambient space is a Cartan-Hadamard manifold, giving the sufficient and necessary conditions for the equalities become more difficult.
\end{rem}

\begin{bibdiv}
	\begin{biblist}
	
	\bib{Auc04}{article}{
		  author={Auchmuty, Giles},
		   title={Steklov eigenproblems and the representation of solutions of elliptic boundary value problems},
			date={2004},
			ISSN={0163-0563},
		 journal={Numer. Funct. Anal. Optim.},
		  volume={25},
		  number={3-4},
		   pages={321\ndash 348},
			 url={https://doi.org/10.1081/NFA-120039655},
		  review={\MR{2072072}},
	}
	
	\bib{CL07}{article}{
		  author={Cao, Linfen},
		  author={Li, Haizhong},
		   title={{$r$}-minimal submanifolds in space forms},
			date={2007},
			ISSN={0232-704X},
		 journal={Ann. Global Anal. Geom.},
		  volume={32},
		  number={4},
		   pages={311\ndash 341},
			 url={https://doi.org/10.1007/s10455-007-9064-x},
		  review={\MR{2346221}},
	}
	
	\bib{Cha78}{article}{
		  author={Chavel, Isaac},
		   title={On {A}. {H}urwitz' method in isoperimetric inequalities},
			date={1978},
			ISSN={0002-9939},
		 journal={Proc. Amer. Math. Soc.},
		  volume={71},
		  number={2},
		   pages={275\ndash 279},
			 url={https://doi.org/10.2307/2042848},
		  review={\MR{493885}},
	}
	
	\bib{Che20}{article}{
		  author={Chen, Hang},
		   title={Extrinsic upper bound of the eigenvalue for {$p$}-{L}aplacian},
			date={2020},
			ISSN={0362-546X},
		 journal={Nonlinear Anal.},
		  volume={196},
		   pages={111833, 9},
			 url={https://doi.org/10.1016/j.na.2020.111833},
		  review={\MR{4074630}},
	}
	
	\bib{CW19a}{article}{
		  author={Chen, Hang},
		  author={Wang, Xianfeng},
		   title={Sharp {R}eilly-type inequalities for a class of elliptic operators on submanifolds},
			date={2019},
			ISSN={0926-2245},
		 journal={Differential Geom. Appl.},
		  volume={63},
		   pages={1\ndash 29},
			 url={https://doi.org/10.1016/j.difgeo.2018.12.008},
		  review={\MR{3896192}},
	}
	
	\bib{CW19}{article}{
		  author={Chen, Hang},
		  author={Wei, Guofang},
		   title={Reilly-type inequalities for {$p$}-{L}aplacian on submanifolds in space forms},
			date={2019},
			ISSN={0362-546X},
		 journal={Nonlinear Anal.},
		  volume={184},
		   pages={210\ndash 217},
			 url={https://doi.org/10.1016/j.na.2019.02.009},
		  review={\MR{3921104}},
	}
	
	\bib{CS18}{article}{
		  author={Chen, Qun},
		  author={Shi, Jianghai},
		   title={A {R}eilly inequality for the first non-zero eigenvalue of a class of operators on {R}iemannian manifold},
			date={2018},
			ISSN={1678-7544},
		 journal={Bull. Braz. Math. Soc. (N.S.)},
		  volume={49},
		  number={3},
		   pages={481\ndash 493},
			 url={https://doi.org/10.1007/s00574-017-0066-4},
		  review={\MR{3862792}},
	}
	
	\bib{DKL16}{article}{
		  author={Dambrine, M.},
		  author={Kateb, D.},
		  author={Lamboley, J.},
		   title={An extremal eigenvalue problem for the {W}entzell-{L}aplace operator},
			date={2016},
			ISSN={0294-1449},
		 journal={Ann. Inst. H. Poincar\'{e} C Anal. Non Lin\'{e}aire},
		  volume={33},
		  number={2},
		   pages={409\ndash 450},
			 url={https://doi.org/10.1016/j.anihpc.2014.11.002},
		  review={\MR{3465381}},
	}
	
	\bib{DMZ21}{article}{
		  author={Domingo-Juan, M.~Carmen},
		  author={Miquel, Vicente},
		  author={Zhu, Jonathan~J.},
		   title={Reilly's type inequality for the {L}aplacian associated to a density related with shrinkers for {MCF}},
			date={2021},
			ISSN={0022-0396},
		 journal={J. Differential Equations},
		  volume={272},
		   pages={958\ndash 978},
			 url={https://doi.org/10.1016/j.jde.2020.10.004},
		  review={\MR{4166071}},
	}
	
	\bib{DM15}{article}{
		  author={Du, Feng},
		  author={Mao, Jing},
		   title={Reilly-type inequalities for {$p$}-{L}aplacian on compact {R}iemannian manifolds},
			date={2015},
			ISSN={1673-3452},
		 journal={Front. Math. China},
		  volume={10},
		  number={3},
		   pages={583\ndash 594},
			 url={https://doi.org/10.1007/s11464-015-0422-x},
		  review={\MR{3323673}},
	}
	
	\bib{EI92}{article}{
		  author={El~Soufi, A.},
		  author={Ilias, S.},
		   title={Une in\'{e}galit\'{e} du type ``{R}eilly'' pour les sous-vari\'{e}t\'{e}s de l'espace hyperbolique},
			date={1992},
			ISSN={0010-2571},
		 journal={Comment. Math. Helv.},
		  volume={67},
		  number={2},
		   pages={167\ndash 181},
			 url={https://doi.org/10.1007/BF02566494},
		  review={\MR{1161279}},
	}
	
	\bib{GP87}{article}{
		  author={Garc\'{\i}a~Azorero, J.~P.},
		  author={Peral~Alonso, I.},
		   title={Existence and nonuniqueness for the {$p$}-{L}aplacian: nonlinear eigenvalues},
			date={1987},
			ISSN={0360-5302},
		 journal={Comm. Partial Differential Equations},
		  volume={12},
		  number={12},
		   pages={1389\ndash 1430},
			 url={https://doi.org/10.1080/03605308708820534},
		  review={\MR{912211}},
	}
	
	\bib{Got88}{incollection}{
		  author={Gottlieb, Daniel~H.},
		   title={A de {M}oivre like formula for fixed point theory},
			date={1988},
	   booktitle={Fixed point theory and its applications ({B}erkeley, {CA}, 1986)},
		  series={Contemp. Math.},
		  volume={72},
	   publisher={Amer. Math. Soc., Providence, RI},
		   pages={99\ndash 105},
			 url={https://doi.org/10.1090/conm/072/956481},
		  review={\MR{956481}},
	}
	
	\bib{Gro04}{article}{
		  author={Grosjean, Jean-Fran\c{c}ois},
		   title={Extrinsic upper bounds for the first eigenvalue of elliptic operators},
			date={2004},
			ISSN={0385-4035},
		 journal={Hokkaido Math. J.},
		  volume={33},
		  number={2},
		   pages={319\ndash 339},
			 url={https://doi.org/10.14492/hokmj/1285766168},
		  review={\MR{2073001}},
	}
	
	\bib{Hei88}{article}{
		  author={Heintze, Ernst},
		   title={Extrinsic upper bounds for {$\lambda_1$}},
			date={1988},
			ISSN={0025-5831},
		 journal={Math. Ann.},
		  volume={280},
		  number={3},
		   pages={389\ndash 402},
			 url={https://doi.org/10.1007/BF01456332},
		  review={\MR{936318}},
	}
	
	\bib{IM11}{article}{
		  author={Ilias, Sa\"{\i}d},
		  author={Makhoul, Ola},
		   title={A {R}eilly inequality for the first {S}teklov eigenvalue},
			date={2011},
			ISSN={0926-2245},
		 journal={Differential Geom. Appl.},
		  volume={29},
		  number={5},
		   pages={699\ndash 708},
			 url={https://doi.org/10.1016/j.difgeo.2011.07.005},
		  review={\MR{2831826}},
	}
	
	\bib{Le06}{article}{
		  author={L\^{e}, An},
		   title={Eigenvalue problems for the {$p$}-{L}aplacian},
			date={2006},
			ISSN={0362-546X},
		 journal={Nonlinear Anal.},
		  volume={64},
		  number={5},
		   pages={1057\ndash 1099},
			 url={https://doi.org/10.1016/j.na.2005.05.056},
		  review={\MR{2196811}},
	}
	
	\bib{MRU21}{article}{
		  author={Manfio, Fernando},
		  author={Roth, Julien},
		  author={Upadhyay, Abhitosh},
		   title={Extrinsic eigenvalues upper bounds for submanifolds in weighted manifolds},
			date={2022},
			ISSN={0232-704X},
		 journal={Ann. Global Anal. Geom.},
		  volume={62},
		  number={3},
		   pages={489\ndash 505},
			 url={https://doi.org/10.1007/s10455-022-09862-0},
		  review={\MR{4483403}},
	}
	
	\bib{Mor29}{article}{
		  author={Morse, Marston},
		   title={Singular {P}oints of {V}ector {F}ields {U}nder {G}eneral {B}oundary {C}onditions},
			date={1929},
			ISSN={0002-9327},
		 journal={Amer. J. Math.},
		  volume={51},
		  number={2},
		   pages={165\ndash 178},
			 url={https://doi.org/10.2307/2370703},
		  review={\MR{1506710}},
	}
	
	\bib{NX21}{article}{
		  author={Niu, Yanyan},
		  author={Xu, Shicheng},
		   title={Total squared mean curvature of immersed submanifolds in a negatively curved space},
			date={2021},
		 journal={arXiv:2106.01912v1},
	}
	
	\bib{Rei77}{article}{
		  author={Reilly, Robert~C.},
		   title={On the first eigenvalue of the {L}aplacian for compact submanifolds of {E}uclidean space},
			date={1977},
			ISSN={0010-2571},
		 journal={Comment. Math. Helv.},
		  volume={52},
		  number={4},
		   pages={525\ndash 533},
			 url={https://doi.org/10.1007/BF02567385},
		  review={\MR{482597}},
	}
	
	\bib{Rot16}{article}{
		  author={Roth, Julien},
		   title={General {R}eilly-type inequalities for submanifolds of weighted {E}uclidean spaces},
			date={2016},
			ISSN={0010-1354},
		 journal={Colloq. Math.},
		  volume={144},
		  number={1},
		   pages={127\ndash 136},
			 url={https://doi.org/10.4064/cm6596-12-2015},
		  review={\MR{3485886}},
	}
	
	\bib{Rot20}{article}{
		  author={Roth, Julien},
		   title={Reilly-type inequalities for {P}aneitz and {S}teklov eigenvalues},
			date={2020},
			ISSN={0926-2601},
		 journal={Potential Anal.},
		  volume={53},
		  number={3},
		   pages={773\ndash 798},
			 url={https://doi.org/10.1007/s11118-019-09787-7},
		  review={\MR{4140078}},
	}
	
	\bib{Ver91}{incollection}{
		  author={Veron, L.},
		   title={Some existence and uniqueness results for solution of some quasilinear elliptic equations on compact {R}iemannian manifolds},
			date={1991},
	   booktitle={Differential equations and its applications ({B}udapest, 1991)},
		  series={Colloq. Math. Soc. J\'{a}nos Bolyai},
		  volume={62},
	   publisher={North-Holland, Amsterdam},
		   pages={317\ndash 352},
		  review={\MR{1468764}},
	}
	
	\bib{Wan09}{article}{
		  author={Wang, Rushan},
		   title={Reilly inequalities of elliptic operators on closed submanifolds},
			date={2009},
			ISSN={0004-9727},
		 journal={Bull. Aust. Math. Soc.},
		  volume={80},
		  number={2},
		   pages={335\ndash 346},
			 url={https://doi.org/10.1017/S0004972709000355},
		  review={\MR{2540366}},
	}
	
	\end{biblist}
	\end{bibdiv}
\end{document}